\crefname{hypothesis}{Hypothesis}{Hypotheses}
\title{An operator-splitting optimization approach for phase-field simulation of equilibrium shapes of crystals\thanks{Submitted to the editors' DATE.
\funding{The work of Zhen Zhang was partially supported by the NSFC grant (NO.~12071207), 
and the Guangdong Provincial Key Laboratory of Computational Science and Material Design (No.~2019B030301001). The work of Wei Jiang was supported by the National Natural Science Foundation of China Grant (No.~12271414)}}.}
\author{
	Zeyu Zhou\thanks{Department of Mathematics, Southern University of Science and Technology, Shenzhen 518055, China (\email{zhouzy2021@mail.sustech.edu.cn}).}		
	\and Wen Huang\thanks{School of Mathematical Sciences, Xiamen University, Xiamen 361005, China 
		(\email{wen.huang@xmu.edu.cn}).}
	\and Wei Jiang\thanks{School of Mathematics and Statistics, Hubei Key Laboratory of Computational Science, Wuhan University, Wuhan 430072, China
		(\email{jiangwei1007@whu.edu.cn}).}
	\and Zhen Zhang\thanks{Department of Mathematics, Guangdong Provincial Key Laboratory of Computational Science and Material Design, International Center for Mathematics, National Center for Applied Mathematics (Shenzhen), Southern University of Science and Technology, Shenzhen 518055, China
		(\email{zhangz@sustech.edu.cn}).}
}
\newcommand{\dx}{\mathrm{d}x}
\begin{document}
	
\maketitle

\begin{abstract}
    Computing equilibrium shapes of crystals (ESC) is a challenging problem in materials science that involves minimizing an orientation-dependent (i.e., anisotropic) surface energy functional subject to a prescribed mass constraint. The highly nonlinear and singular anisotropic terms in the problem make it very challenging from both the analytical and numerical aspects. Especially, when the strength of anisotropy is very strong (i.e., strongly anisotropic cases), the ESC will form some singular, sharp corners even if the surface energy function is smooth. Traditional numerical approaches, such as the $H^{-1}$ gradient flow, are unable to produce true sharp corners due to the necessary addition of a high-order regularization term that penalizes sharp corners and rounds them off. In this paper, we propose a new numerical method based on the Davis-Yin splitting (DYS) optimization algorithm to predict the ESC instead of using gradient flow approaches. We discretize the infinite-dimensional phase-field energy functional in the absence of regularization terms and transform it into a finite-dimensional constraint minimization problem. The resulting optimization problem is solved using the DYS method which automatically guarantees the mass-conservation and bound-preserving properties. We also prove the global convergence of the proposed algorithm. These desired properties are numerically observed. In particular, the proposed method can produce real sharp corners with satisfactory accuracy. Finally, we present numerous numerical results to demonstrate that the ESC can be well simulated under different types of anisotropic surface energies, which also confirms the effectiveness and efficiency of the proposed method.
\end{abstract}

\begin{keywords}
	phase-field,
    anisotropy,
    optimization approach,
	equilibrium shapes of crystals,
	Davis-Yin splitting
\end{keywords}

\begin{MSCcodes}
	74G15, 74G65, 65Z05
\end{MSCcodes}

\section{Introduction}
Computing equilibrium shapes of crystals (ESC) is an important and centuries-old interface problem arising from materials science. Meanwhile, it is very challenging to design materials with specific functional properties for many nano-technological applications \cite{armelao2006,danielson2006,thompson2012}. The ESC problem can be mathematically described as finding a minimizer of an orientation-dependent (i.e., anisotropic) surface energy functional with a prescribed mass constraint \cite{Gibbs1878}. The geometric construction of ESC was given by the famous Wulff construction in 1901 \cite{wulff1901}, which was rigorously proved by using geometric measure theory~\cite{Fonseca91}. However, the highly nonlinear, singular anisotropic terms in surface energy would bring considerable challenges to theoretical analysis, modeling and numerical simulations. Especially, when the strength of anisotropy is strong enough, some sharp corners will appear in the ESC even if the anisotropic surface energy function is smooth, and this phenomenon is very difficult to capture in numerical simulations~\cite{wise2007solving,torabi2009new,chen2013efficient}.

Generally, there exist two widely-used classes of mathematical models in the literature which can be applied to simulate the ESC, i.e., the sharp-interface models~\cite{Gibbs1878,Jiang17} and the phase-field models \cite{kobayashi1993modeling, torabi2009new}. In this paper, we mainly focus on the phase-field models. To begin with, we consider the interface problem in a fixed, bounded domain $\Omega\subset\mathbb{R}^d$, where $d=2,3$ is the space dimension. Let $\phi$ be a phase variable which takes the values $\pm1$ in the two phases with a smooth transition layer between them. We use the zero level set $\{x \in \Omega: \phi(x,t)=0\}$ to represent the interface curve/surface, i.e., the shape of crystals. We denote the gradient of $\phi$ as $\bm{q}=\nabla \phi$, and the corresponding normal vector as $\bm{n}=(n_1,\cdots,n_d)=\frac{\bm{q}}{|\bm{q}|}$, provided that $|\bm{q}|\neq0$. We consider the following anisotropic Kobayashi-type free energy \cite{kobayashi1993modeling} subject to the mass-conservation constraint:
\begin{equation}\label{energy_functional}
E(\phi)=\int_{\Omega}\left(f(\phi)+\frac{\varepsilon^2}{2}\gamma^2(\bm{n})|\bm{q}|^2\right) \dx, \quad\text{subject to}\quad \int_{\Omega}\phi~\dx=\text{const},
\end{equation}
where $f(\phi)=\frac{\left(\phi^2-1\right)^2}{4}$ is the double-well potential, $\varepsilon$ is a small parameter which controls the thickness of the transition layer, and $\gamma(\bm{n})$ is the surface energy density. For instance, except otherwise specified, we will use the following four-fold 
surface energy density throughout this paper,
\begin{equation}\label{four-fold}
\gamma(\bm{n})=1+\alpha\ (4\sum_{i=1}^{d}n_i^4-3),
\end{equation}
where $\alpha\geqslant0$ controls the strength of anisotropy. More precisely, when $\alpha=0$, the surface energy is isotropic; otherwise, it is anisotropic. In particular, the surface energy becomes strongly anisotropic if $\alpha>\frac{1}{15}$, 
and in this case the ESC will exhibit sharper and sharper corners as $\alpha$ increases. 
It is noteworthy that other types of anisotropic surface energy exist in the literature, for example, the Torabi-type energy~\cite{torabi2009new}, which results in an interface with uniform thickness and independent of orientation. However, due to its complexity, this type of surface energy functional 
is beyond the scope of current work and will be left for our future study.

The commonly used approach to minimizing the anisotropic free energy functional \eqref{energy_functional} subject to the mass-conservation constraint is via the $H^{-1}$ gradient flow induced by the energy functional, i.e., solving the following anisotropic Cahn-Hilliard equation (\cite{wise2007solving,cheng2020weakly}) until its steady state:
\begin{equation}\label{anisotropic_cahn}
\begin{cases}
\partial_t \phi=\Delta \mu,\\
\mu=f'(\phi)-\varepsilon^2 \nabla \cdot\bm{m},\\
\bm{m}=\gamma^2(\boldsymbol{n}) \nabla \phi+\gamma(\boldsymbol{n})|\nabla \phi| (\bm{I}-\bm{n}\otimes\bm{n}) \nabla_n \gamma(\boldsymbol{n}),  
\end{cases}
\end{equation}
where $\nabla_n$ denotes the gradient operator with respect to $\bm{n}$, $\bm{I}$ represents the identity matrix, and $\otimes$ denotes the Kronecker tensor product. It is worth noting that the anisotropic Cahn-Hilliard equation \eqref{anisotropic_cahn} reduces to the classical isotropic Cahn-Hilliard equation when the surface energy function $\gamma(\bm{n})\equiv 1$. Nevertheless, solving the partial differential equation (PDE) \eqref{anisotropic_cahn} would be challenging from both analytical and numerical aspects due to the high nonlinearity and singularity of the anisotropic terms. In particular, one needs to develop energy stable and bound-preserving (on $[-1,1]$) numerical schemes, which is not easy. 
Another critical difficulty arises from the fact that the anisotropic Cahn-Hilliard equation may become ill-posed due to the anti-diffusion in $\nabla\cdot \bm{m}$ when the surface energy density is strongly anisotropic~\cite{wise2007solving}. 

In order to tackle the ill-posedness of the dynamic problem arising from the strongly anisotropic surface energy, a commonly-used approach in the literature is introducing high-order regularization, for instance, adding a bi-harmonic regularization term $\int_\Omega(\Delta\phi)^2\dx$ \cite{wise2007solving} into the Kobayashi-type energy functional \eqref{energy_functional}, or a Willmore regularization term 
$\int_\Omega(\Delta\phi-\frac{1}{\varepsilon^2}f(\phi))^2\dx$ \cite{chen2013efficient,makki2016existence} into the Torabi-type energy functional. However, the high-order regularization terms would bring other difficulties in numerical simulations. More importantly, it would change the ESC for strongly anisotropic systems by penalizing sharp corners and rounding them off in a small length scale, which is unphysical and deviates from experimental observations. Therefore, developing alternative approaches to capturing real sharp corners of the ESC in strongly anisotropic systems is in great demand.

To overcome these difficulties, we consider the direct minimization of the energy functional in 
\eqref{energy_functional} subject to both the mass-conservation and box constraints,
\begin{equation}\label{original_problem}
	\mathop{\arg\min}_\phi\ E(\phi),\quad\ \text{subject to:} \int_{\Omega}\phi \ \dx=\text{constant}~\text{ and }~ \|\phi\|_\infty\leqslant 1.
\end{equation}
This problem can be numerically solved by a discretization-then-optimization approach. 
We first approximate $\phi$ by a finite-dimensional vector and discretize spatial derivatives of all the terms in the energy functional. Then, \eqref{original_problem} is approximated by a finite-dimensional optimization problem, which can be solved efficiently and accurately using finite-dimensional optimization techniques. 
The main idea of this approach is illustrated in \cref{idea}.

\begin{figure}[!htpb]
	\centering
	\includegraphics[width=0.6\linewidth]{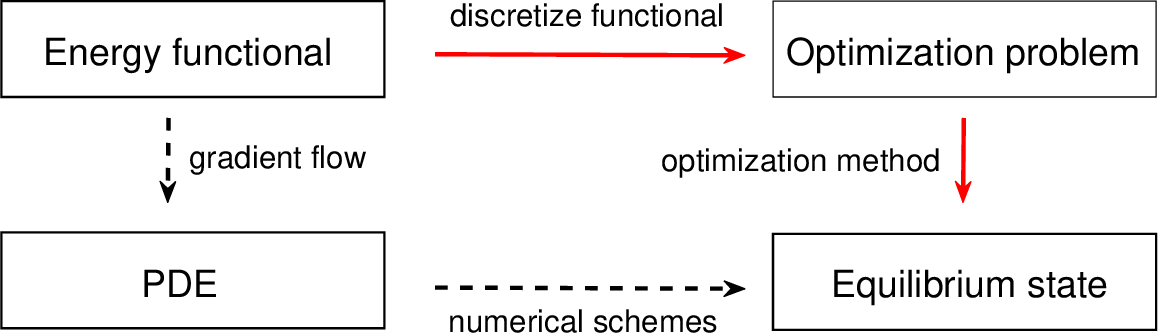}
	\caption{A diagram for the discretization-then-optimization approach versus the gradient flow approach, whose flowcharts are depicted by the red solid lines and the black dash lines respectively.}
	\label{idea}
\end{figure}

The discretization-then-optimization approach is widely used in solving flow control problems \cite{gunzburger1995perspectives}. Hereby it is the first attempt in the literature to apply this approach for obtaining the ESC of anisotropic phase-field models. 
The major difficulty arises from the constrainted non-convex optimization problem after discretization.
To solve this finite-dimensional optimization efficiently with guaranteed convergence, we reformulated the problem into a combination of three properly organized functions and applied the Davis-Yin splitting (DYS) algorithm \cite{davis2017three}.
The DYS algorithm is a three-operator splitting method that was initially designed for convex optimizations, with its convergence being proved as a particular case of fixed-point iteration. Recently this splitting was generalized for non-convex optimizations with guaranteed global convergence to critical points \cite{bian2021three}. For the problem \eqref{original_problem} under our consideration, we decomposed the objective function into the difference of two convex functions (the DC technique in the optimization literature \cite{an2005dc}, or the convex splitting technique in numerical PDE literature \cite{eyre1998unconditionally}), and introduced an indicator function to enforce the constraints. The three split functions were shown to enjoy the desired properties such that the DYS algorithm has global convergence. Moreover, this particularly devised DYS algorithm was efficiently implemented based on fast solvers. As a byproduct, the mass-conservation and the box constraint are satisfied exactly at each step in the iteration.
Various numerical experiments have demonstrated the capability of the proposed numerical method in accurately computing the ESC while maintaining real sharp corners for strongly anisotropic surface energies.

The rest of the paper is organized as follows. In Section 2, we introduce necessary mathematical notations and discretize the ESC problem into a finite-dimensional optimization problem. In Section 3, we present our numerical method based on the DYS algorithm for solving the ESC problem and depict the global convergence result of the algorithm, while leaving detailed proofs in the appendices. Section 4 is devoted to the numerical performance of the proposed method when applied to different anisotropic surface energy densities. Concluding remarks are made in Section 5.

\section{Numerical discretization}
We first introduce necessary notations to discretize the energy functional \eqref{energy_functional}. Consider a rectangular domain $ \Omega=[0,1]\times[0,1] $ in two-dimension (three-dimension is similar). Let $ h=\frac{1}{m} $ be the mesh size, where $ m\in \mathbf{Z}^+ $ is the number of grid points. We discretize $\phi$ as a grid vector $\phi_{i,j}:=\phi(x_i,y_j)$ where $ x_i=(i-\frac{1}{2})h$, $y_j=(j-\frac{1}{2})h $ with $ i,j=1,2,\cdots,m $. This grid vector can be rearranged to be a column vector $\bm{\phi}:=(\phi_1,\phi_2,\cdots,\phi_{m^2})^T$, with $ \phi_{i+m(j-1)}:=\phi_{i,j} $. 

Without loss of generality, we adopt periodic boundary conditions. For simplicity of our presentation, we define the one-dimensional one-sided difference matrix by
\[
D:=\frac{1}{h}\left[ \begin{matrix}
	-1 & 1 & {} & {} & {}  \\
	{} & -1 & 1 & {} & {}  \\
	{} & {} & \ddots  & \ddots  & {}  \\
	{} & {} & {} & -1 & 1  \\
	1 & {} & {} & {} & -1  \\
\end{matrix} \right].
\]
High-order difference matrices can be used if one seeks more accurate approximations. The two-dimensional difference matrices in $ x-$ and $ y-$directions can be defined through Kronecker tensor product:
\[
D_x:={{I}_{m}}\otimes D, \quad D_y:=D\otimes {{I}_{m}},
\]
where $ I_m $ is an $ m\times m $ identity matrix. The discrete negative Laplacian operator is
\begin{equation}\label{descrete_laplace}
    L:=D_x^TD_x+D_y^TD_y.
\end{equation}

With these notations, we can discretize $ \nabla \phi $ as
\begin{equation}\label{def_p}
	\bm{p}=\left[\begin{matrix}
		D_x\\D_y
	\end{matrix}\right]\bm{\phi},
 \quad\mbox{and}\quad
 \bm{p}_k=
	\left[\begin{matrix}
		\bm{e}_k^T &  \\& \bm{e}_k^T 
	\end{matrix}\right]\bm{p}=
	\left[\begin{matrix}
		\bm{e}_k^TD_x\\ \bm{e}_k^TD_y
	\end{matrix}\right]\bm{\phi},\quad k=1,2,\ldots,m^2.
\end{equation}
Here $\bm{p}_k$ is the approximation of $ \nabla \phi $ at each grid point and $\bm{p}$ is their collection,
 $ \bm{e_k}=(0,\cdots,1,\cdots,0)^T $ is an $ m^2\times1 $ canonical vector with only the $ k$-th element being $ 1 $. Then the corresponding unit normal vector at each grid point is
\begin{equation}\label{def_nk}
	\bm{n}_k=\frac{\bm{p}_k}{|\bm{p}_k|},\quad k=1,2,\ldots,m^2\text{, when }\bm{p}_k\ne\bm{0}.
\end{equation}
We set $ \bm{n}_k=\bm{0} $ when $ \bm{p}_k=\bm{0} $, which leads to a continuous gradient of the discrete energy function. 

Consequently, the finite-dimensional discretization of \eqref{original_problem} becomes
\begin{equation}\label{discrete problem}
    \begin{split}
        \mathop{\arg\min}_{\bm{\phi}}\ E_h(\bm{\phi})=\sum_{k=1}^{m^2}\left(f(\phi_k)+\frac{\varepsilon^2}{2}\gamma^2(\bm{n}_k)|\bm{p}_k|^2\right),\\ \text{subject to: } \bm{1}^T\bm{\phi}=\text{constant}~\text{ and }~ \|\bm{\phi}\|_\infty\leqslant 1.
    \end{split}
\end{equation}
where $ \bm{1}=(1,1,\cdots,1)^T $ is an $ m^2\times1 $ vector.

\section{Numerical method}\label{section3}
In this section, we explore the Davis-Yin splitting (DYS) algorithm for simulating the ESC problem. The DYS algorithm, originally introduced as a three-operator splitting technique in \cite{davis2017three} for convex optimizations, was recently generalized for non-convex problems with its global convergence established in~\cite{bian2021three}. In Section~\ref{section2}, we first introduce the generic DYS algorithm and give its convergence results.
Then, we propose the numerical method for solving the ESC problem based on the DYS algorithm in Section~\ref{section5}. Its convergence is verified through the validation of the key properties of the split functions in Section~\ref{section6}. 

\subsection{The Davis-Yin splitting algorithm}\label{section2}
The DYS algorithm aims to solve an optimization problem in the form of
\begin{equation}\label{opt3}
	\mathop{\arg\min}_{\bm{x}\in\mathbb{R}^n}~ F(\bm{x})+G(\bm{x})+H(\bm{x}),
\end{equation}
where $ F,~G,$ and $~H$ satisfy Assumption~\ref{assumption} shown below.
\begin{assumption}\label{assumption}
The functions $F$ and $H$ are Lipschitz continuously differentiable, i.e., there exist positive constants $L_{_{F}},~L_{_{H}}$ such that
		\[
		\begin{aligned}
			\left\|\nabla F\left(\bm{x}_1\right)-\nabla F\left(\bm{x}_2\right)\right\| \leq L_{_{F}}\left\|\bm{x}_1-\bm{x}_2\right\| \quad \forall \bm{x}_1, \bm{x}_2 \in \mathbb{R}^n,\\
			\left\|\nabla H\left(\bm{x}_1\right)-\nabla H\left(\bm{x}_2\right)\right\| \leq L_{_{H}}\left\|\bm{x}_1-\bm{x}_2\right\| \quad \forall \bm{x}_1, \bm{x}_2 \in \mathbb{R}^n,
		\end{aligned}
		\]
and the function $G$ is an indicator function of a nonempty closed convex set. 
		
\end{assumption}
The DYS algorithm is stated in~\cref{alg:algorithm1}.
\begin{algorithm}
	\caption{DYS for problem \eqref{opt3}}
	\label{alg:algorithm1}
	
	
 \begin{algorithmic}[1]
 \REQUIRE An initial $ \bm{x}^{(0)} $ and a step size $ \tau $;
 \WHILE {a termination criterion is not met,}
 \STATE Set {\small
	\begin{align}
		&\bm{y}^{(n+1)} \in \mathop{\arg\min}_{\bm{y}}\left\{F(\bm{y})+\frac{1}{2 \tau}\left\|\bm{y}-\bm{x}^{(n)}\right\|^2\right\}, \label{step1_alg1} \\
		&\bm{z}^{(n+1)} \in \arg \min _{\bm{z}}\left\{G(\bm{z})+\frac{1}{2 \tau}\left\|\bm{z}-\left(2 \bm{y}^{(n+1)}-\tau \nabla H(\bm{y}^{(n+1)})-\bm{x}^{(n)}\right)\right\|^2\right\}, \label{step2_alg1}\\
		&\bm{x}^{(n+1)}=\bm{x}^{(n)}+(\bm{z}^{(n+1)}-\bm{y}^{(n+1)}) \label{step3_alg1};
	\end{align} }
\vspace{-1em}
 \STATE $n = n + 1$;
 \ENDWHILE
 \end{algorithmic}
\end{algorithm}

The first-order optimality conditions for the subproblems in \cref{alg:algorithm1} are
\begin{align}
	&0 = \nabla F(\bm{y}^{(n+1)})+\frac{1}{\tau}(\bm{y}^{(n+1)}-\bm{x}^{(n)}),\label{first_optimal} \\
	&0 \in \partial G(\bm{z}^{(n+1)})+\frac{1}{\tau}(\bm{z}^{(n+1)}-2 \bm{y}^{(n+1)}+\tau \nabla H(\bm{y}^{(n+1)})+\bm{x}^{(n)}),	\label{second_optimal}
\end{align}
where $\partial$ denotes the Clarke generalized subdifferential \cite{clarke1990optimization}.
Combining \eqref{first_optimal} and \eqref{second_optimal} together, we have
\begin{equation}\label{optimal_condtion}
	\frac{\bm{y}^{(n+1)}-\bm{z}^{(n+1)}}{\tau}\in \nabla F(\bm{y}^{(n+1)})+\partial G(\bm{z}^{(n+1)})+\nabla H(\bm{y}^{(n+1)}).
\end{equation}
This implies that if the sequence $\{x^{(n+1)},~y^{(n+1)},~z^{(n+1)}\}_{n\geqslant 0}$ has a cluster point $(x^*,~y^*,~z^*)$ and $ \lim\limits_{n\to\infty}\|\bm{y}^{(n+1)}-\bm{z}^{(n+1)}\|=0 $, then $ \bm{z}^* $ is a critical point of Problem~\eqref{opt3}. 

It has been shown in~\cite{bian2021three} that the DYS algorithm is a descent algorithm with respect to the function
\begin{equation}\label{new_energy}
	\begin{split}
		\Theta_\tau(\bm{x}, \bm{y}, \bm{z})=& F(\bm{y})+G(\bm{z})+H(\bm{y})+\frac{1}{2 \tau}\|2 \bm{y}-\bm{z}-\bm{x}-\tau \nabla H(\bm{y})\|^2 \\
		&-\frac{1}{2 \tau}\|\bm{x}-\bm{y}+\tau \nabla H(\bm{y})\|^2-\frac{1}{\tau}\|\bm{y}-\bm{z}\|^2
	\end{split}
\end{equation}
for certain $\tau > 0$ in the sense that 
\begin{equation}\label{descent_lemma}
    	\quad \Theta_\tau\left(\bm{x}^{(n+1)}, \bm{y}^{(n+1)}, \bm{z}^{(n+1)}\right)-\Theta_\tau\left(\bm{x}^{(n)}, \bm{y}^{(n)}, \bm{z}^{(n)}\right) \leqslant-\mathcal{D}(\tau)\left\|\bm{y}^{(n+1)}-\bm{y}^{(n)}\right\|^2.
\end{equation}
There exists a threshold $\bar{\tau}$ such that $\mathcal{D}(\tau) >0$ if $0<\tau<\bar{\tau}$. Especially, if $ F $ is convex, then the threshold $\bar{\tau}$ is
	\begin{equation}\label{tau_threshold}
		\bar{\tau}=\min \left\{\frac{1}{L_{_{F}}}, \frac{-(2 L_{_{H}}+6 L_{_{F}})+\sqrt{(2 L_{_{H}}+6 L_{_{F}})^2+12 L_{_{H}} L_{_{F}}}}{6 L_{_{H}} L_{_{F}}}\right\}.
	\end{equation}

The global convergence of the DYS algorithm is given by the following theorem.
\begin{theorem}[global convergence of the whole sequence \cite{bian2021three}]\label{lemma_globalconvergence}
	Let  \cref{assumption} be satisfied and let the parameter $\tau$ in \cref{alg:algorithm1} be such that $\mathcal{D}(\tau)>0$. Let $\left\{\left(\bm{x}^{(n)}, \bm{y}^{(n)}, \bm{z}^{(n)}\right)\right\}_{n \geqslant 1}$ be a sequence generated by \cref{alg:algorithm1} which has a cluster point  $\left(\bm{x}^*, \bm{y}^*, \bm{z}^*\right)$. If the functions $F$, $G$ and $H$ are semi-algebraic (see  \cref{appe_semi}), then the following statements hold:
    \begin{enumerate}[(i)]
        \item  $0 \in \nabla F\left(\bm{z}^*\right)+\partial G\left(\bm{z}^*\right)+\nabla H\left(\bm{z}^*\right)$, i.e., $z^*$ is a critical point.

        \item The limit $\lim _{n \rightarrow \infty} \Theta_\tau\left(\bm{x}^{(n)}, \bm{y}^{(n)}, \bm{z}^{(n)}\right)$ exists and
        \[
	\Theta^*:=\lim _{t \rightarrow \infty} \Theta_\tau\left(\bm{x}^{(n)}, \bm{y}^{(n)}, \bm{z}^{(n)}\right)=\Theta_\tau\left(\bm{x}^*, \bm{y}^*, \bm{z}^*\right).
	\]

        \item $\sum_{n \geqslant 1}\left\|\bm{x}^{(n+1)}-\bm{x}^{(n)}\right\|,~ \sum_{n \geqslant 1}\left\|\bm{y}^{(n+1)}-\bm{y}^{(n)}\right\|~\text{and}~ \sum_{n \geqslant 1}\left\|\bm{z}^{(n+1)}-\bm{z}^{(n)}\right\|$ are all convergent series.
    \end{enumerate}
\end{theorem}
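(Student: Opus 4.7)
The proof strategy I would adopt is the standard Kurdyka-Łojasiewicz (KL) framework for non-convex proximal splitting methods, resting on three ingredients: (a) the sufficient-decrease inequality \eqref{descent_lemma}, which is assumed to have $\mathcal{D}(\tau)>0$; (b) a relative-error bound controlling the subdifferential of the Lyapunov function $\Theta_\tau$ at each iterate in terms of consecutive iterate differences; and (c) the KL property for $\Theta_\tau$, which follows from the semi-algebraicity of $F,G,H$ together with the observation that the quadratic coupling terms in \eqref{new_energy} are polynomial, so $\Theta_\tau$ is itself semi-algebraic.

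For part (i), along the iterates $G(\bm{z}^{(n)})=0$ (since $\bm{z}^{(n+1)}$ is the projection in \eqref{step2_alg1}), so $\Theta_\tau(\bm{x}^{(n)},\bm{y}^{(n)},\bm{z}^{(n)})$ is a real number and, by continuity of $F$, $H$ and of the quadratic terms at the cluster point, is bounded below. Telescoping \eqref{descent_lemma} then yields $\sum_n\|\bm{y}^{(n+1)}-\bm{y}^{(n)}\|^2<\infty$, and in particular $\|\bm{y}^{(n+1)}-\bm{y}^{(n)}\|\to 0$. Writing \eqref{first_optimal} at consecutive indices and using the Lipschitz continuity of $\nabla F$ gives $\|\bm{x}^{(n+1)}-\bm{x}^{(n)}\|\to 0$. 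Rewriting \eqref{step3_alg1} as $\bm{z}^{(n+1)}-\bm{y}^{(n+1)}=\bm{x}^{(n+1)}-\bm{x}^{(n)}$ then yields $\|\bm{y}^{(n+1)}-\bm{z}^{(n+1)}\|\to 0$, forcing $\bm{x}^*=\bm{y}^*=\bm{z}^*$. Passing to the limit in \eqref{optimal_condtion} along the subsequence converging to the cluster point, and invoking outer semicontinuity of the Clarke subdifferential of the convex indicator $G$, delivers $0\in\nabla F(\bm{z}^*)+\partial G(\bm{z}^*)+\nabla H(\bm{z}^*)$.

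Part (ii) follows immediately: \eqref{descent_lemma} makes $\{\Theta_\tau(\bm{x}^{(n)},\bm{y}^{(n)},\bm{z}^{(n)})\}$ non-increasing and bounded below, hence convergent to some $\Theta^*$. Passing the subsequence converging to $(\bm{x}^*,\bm{y}^*,\bm{z}^*)$ to the limit and using continuity of $F,H$, the quadratic coupling terms, and the fact that $G(\bm{z}^{(n)})=0$ identify $\Theta^*=\Theta_\tau(\bm{x}^*,\bm{y}^*,\bm{z}^*)$.

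The bulk of the work is in part (iii), where I would establish a relative-error bound of the form
\[
\mathrm{dist}\bigl(0,\partial\Theta_\tau(\bm{x}^{(n+1)},\bm{y}^{(n+1)},\bm{z}^{(n+1)})\bigr)\leqslant C\bigl(\|\bm{y}^{(n+1)}-\bm{y}^{(n)}\|+\|\bm{z}^{(n+1)}-\bm{z}^{(n)}\|\bigr),
\]
obtained by computing the partial Clarke subdifferentials of \eqref{new_energy} in each of $\bm{x},\bm{y},\bm{z}$, substituting the first-order conditions \eqref{first_optimal} and \eqref{second_optimal}, and using Lipschitz continuity of $\nabla F$ and $\nabla H$. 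Combining this with \eqref{descent_lemma} and the KL inequality for $\Theta_\tau$ at $(\bm{x}^*,\bm{y}^*,\bm{z}^*)$ (valid since $\Theta_\tau$ is semi-algebraic), the classical Attouch-Bolte-Svaiter summability argument produces $\sum_n\|\bm{y}^{(n+1)}-\bm{y}^{(n)}\|<\infty$. Finiteness of $\sum_n\|\bm{x}^{(n+1)}-\bm{x}^{(n)}\|$ then follows from \eqref{first_optimal} with the Lipschitz continuity of $\nabla F$, and finiteness of $\sum_n\|\bm{z}^{(n+1)}-\bm{z}^{(n)}\|$ follows from $\bm{z}^{(n+1)}=\bm{y}^{(n+1)}+\bm{x}^{(n+1)}-\bm{x}^{(n)}$. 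The main obstacle I anticipate is the relative-error bound itself: although sufficient decrease on $\Theta_\tau$ is given, $\Theta_\tau$ is considerably more intricate than $F+G+H$, and the chain-rule computation of its Clarke subdifferential — in particular the management of the quadratic cross terms $\|2\bm{y}-\bm{z}-\bm{x}-\tau\nabla H(\bm{y})\|^2$ and $\|\bm{x}-\bm{y}+\tau\nabla H(\bm{y})\|^2$ — is exactly where the threshold $\tau<\bar\tau$ and the Lipschitz constants $L_F,L_H$ must be used to absorb all resulting coefficients into the single constant $C$.
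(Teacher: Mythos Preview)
The paper does not prove this theorem: it is quoted directly from \cite{bian2021three} and no argument is supplied. Your proposal reconstructs precisely the machinery that reference uses --- the Attouch--Bolte--Svaiter KL framework built on (a) the sufficient-decrease inequality \eqref{descent_lemma}, (b) a relative-error bound on $\partial\Theta_\tau$, and (c) semi-algebraicity of $\Theta_\tau$ --- so your approach is the correct one and matches the source the paper defers to.

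One small slip: you write that $\|\bm{y}^{(n+1)}-\bm{z}^{(n+1)}\|\to 0$ forces $\bm{x}^*=\bm{y}^*=\bm{z}^*$. In fact only $\bm{y}^*=\bm{z}^*$ follows; from \eqref{first_optimal} one has $\bm{x}^*=\bm{y}^*+\tau\nabla F(\bm{y}^*)$, which is generally distinct from $\bm{y}^*$. This does not damage your argument for part (i), since passing to the limit in \eqref{optimal_condtion} requires only $\bm{y}^*=\bm{z}^*$, and the conclusion $0\in\nabla F(\bm{z}^*)+\partial G(\bm{z}^*)+\nabla H(\bm{z}^*)$ then follows exactly as you describe. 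For part (iii), note also that $\|\bm{z}^{(n+1)}-\bm{z}^{(n)}\|$ is controlled by $\|\bm{y}^{(n+1)}-\bm{y}^{(n)}\|$ together with $\|\bm{y}^{(n+2)}-\bm{y}^{(n+1)}\|$ (via \eqref{step3_alg1} and \eqref{first_optimal}), so the relative-error bound can ultimately be phrased in terms of the $\bm{y}$-increments alone, which is what makes the KL summability argument close cleanly against the decrease inequality \eqref{descent_lemma}.
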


\subsection{DYS algorithm for the ESC problem} \label{section5}
Motivated by the convex splitting technique \cite{eyre1998unconditionally} and the linear stabilization method \cite{shen2010numerical} in numerical methods of phase-field equations, or the DC technique \cite{an2005dc} in the optimization literature, we propose the following splitting of the objective function:
\begin{equation}\label{splitting form}
	\begin{aligned}
		F(\bm{\phi})&=a\frac{\varepsilon^2 }{2}{{\bm{\phi} }^{T}}L\bm{\phi}+\frac{b}{2}\sum_{k=1}^{m^2}\phi_k^2,\\
		G(\bm{\phi})&=\delta_C(\bm{\phi}),\\
		H(\bm{\phi})&=\sum_{k=1}^{m^2}\left(f(\phi_k)-\frac{b}{2}\phi_k^2+\frac{\varepsilon^2}{2}\gamma^2(\bm{n}_k)|\bm{p}_k|^2\right)-a\frac{\varepsilon^2 }{2}{{\bm{\phi} }^{T}}L\bm{\phi},
	\end{aligned}
\end{equation}
where $ a $ and $ b $ are positive constants such that $ H(\bm{\phi}) $ is a concave function in the set $C$ defined by
\begin{equation}\label{setC}
	C= \{\bm{\phi}:\bm{1}^T\bm{\phi}=V_0=\text{constant}, \text{ and }\|\bm{\phi}\|_\infty\leqslant1\},
\end{equation}
and $\delta_C(\bm{\phi})$ is its indicator function, i.e., $\delta_C(\bm{\phi})=0$ if $ \bm{\phi}\in C $, otherwise $ \delta_C(\bm{\phi})=+\infty $.


Since $ F $ and $ G $ are convex functions, the solutions of Subproblems~\eqref{step1_alg1} and~\eqref{step2_alg1} are unique. The first-order optimality condition of Subproblem~\eqref{step1_alg1} yields 
\[
\frac{\bm{y}^{(n+1)}-\bm{x}^{(n)}}{\tau}=-(a\varepsilon^2L+bI_{m^2})\bm{y}^{(n+1)},
\]
which can be solved efficiently by the fast Fourier transformation (FFT).
The solution of Subproblem \eqref{step2_alg1} is computed in a closed form:
\[
\bm{z}^{(n+1)}=\operatorname{P}_C\left(2 \bm{y}^{(n+1)}-\tau \nabla H\left(\bm{y}^{(n+1)}\right)-\bm{x}^{(n)}\right),
\]
where $ \operatorname{P}_C $ is the projection operator of $C$ that is given by the following lemma~\cite{beck2017first}.
\begin{lemma}[Projection onto the set $ C $ \cite{beck2017first}]\label{projection lemma}
	If $ C $ is a set defined in \eqref{setC}, then
	\[{\operatorname{P}_{C}}(\bm{\phi})={\operatorname{P}_{Box[\mathbf{-1,1}]}}(\bm{\phi}-{{\lambda }^{*}}\mathbf{1})=(\min \left\{ \max \left\{ {{(\bm{\phi}-{{\lambda }^{*}}\mathbf{1})}_{i}},-1 \right\},1 \right\})_{i=1}^{m^2},\]
	where ${{\lambda }^{*}}$ is a root of the equation
	\begin{equation}\label{equ:projection}
		\tilde{f}(\lambda) := {{\mathbf{1}}^{T}}{\operatorname{P}_{Box[\mathbf{-1,1}]}}(\bm{\phi}-\lambda \mathbf{1})-V_0 = 0,
	\end{equation}
	and the projection $ \operatorname{P}_{Box[\mathbf{-1,1}]}(\bm{\psi}) $ is a cut-off operator, i.e.,  for any $ \bm{\psi}\in\mathbb{R}^{m^2} $, $$ \operatorname{P}_{Box[\mathbf{-1,1}]}(\bm{\psi})=(\min \left\{ \max \left\{ \psi_i,-1 \right\},1 \right\})_{i=1}^{m^2}. $$ 
\end{lemma}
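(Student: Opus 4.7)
The plan is to derive the projection formula as a direct consequence of the KKT optimality conditions for the convex quadratic program defining the projection, and then to show that the scalar equation for $\lambda^*$ is always solvable. Concretely, the projection $\operatorname{P}_C(\bm{\phi})$ is by definition the unique solution of
\begin{equation*}
\min_{\bm{\psi}\in\mathbb{R}^{m^2}} \tfrac{1}{2}\|\bm{\psi}-\bm{\phi}\|^2 \quad\text{subject to}\quad \mathbf{1}^T\bm{\psi}=V_0,\ -1\leqslant\psi_i\leqslant 1,
\end{equation*}
which is strictly convex with a nonempty polyhedral feasible set (assumed throughout, i.e.\ $|V_0|\leqslant m^2$), so the minimizer exists, is unique, and is characterized by its KKT conditions.

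First I would associate a Lagrange multiplier $\lambda\in\mathbb{R}$ only to the equality constraint $\mathbf{1}^T\bm{\psi}=V_0$, leaving the box constraints inside the minimization. The partial Lagrangian
\begin{equation*}
\mathcal{L}(\bm{\psi},\lambda)=\tfrac{1}{2}\|\bm{\psi}-\bm{\phi}\|^2+\lambda(\mathbf{1}^T\bm{\psi}-V_0)=\tfrac{1}{2}\|\bm{\psi}-(\bm{\phi}-\lambda\mathbf{1})\|^2+\text{(terms independent of }\bm{\psi}\text{)}
\end{equation*}
is separable in the components $\psi_i$, so for any fixed $\lambda$ the minimizer over $[-1,1]^{m^2}$ is the componentwise clip
\begin{equation*}
\bm{\psi}(\lambda)=\operatorname{P}_{Box[-1,1]}(\bm{\phi}-\lambda\mathbf{1}).
\end{equation*}
Strong duality holds because the problem is a convex QP with a feasible Slater point (any strict interior of the box lying on the hyperplane), so the primal optimum equals $\max_\lambda \min_{\bm{\psi}\in[-1,1]^{m^2}} \mathcal{L}(\bm{\psi},\lambda)$, and the primal optimum $\bm{\psi}^*=\operatorname{P}_C(\bm{\phi})$ equals $\bm{\psi}(\lambda^*)$ for any dual optimal $\lambda^*$. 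Enforcing primal feasibility $\mathbf{1}^T\bm{\psi}(\lambda^*)=V_0$ is precisely the equation $\tilde{f}(\lambda^*)=0$ in the statement.

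It remains to show that such a $\lambda^*$ exists. I would argue that $\tilde f(\lambda)$ is continuous and non-increasing in $\lambda$, since each coordinate of $\operatorname{P}_{Box[-1,1]}(\bm{\phi}-\lambda\mathbf{1})$ is a continuous non-increasing function of $\lambda$. The boundary behavior
\begin{equation*}
\lim_{\lambda\to-\infty}\tilde f(\lambda)=m^2-V_0\geqslant 0,\qquad \lim_{\lambda\to+\infty}\tilde f(\lambda)=-m^2-V_0\leqslant 0,
\end{equation*}
combined with continuity, yields a root $\lambda^*\in\mathbb{R}$ by the intermediate value theorem (with the convention that if either boundary value vanishes, the corresponding limiting $\lambda$ can be interpreted as a root and in fact the clip is constant). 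The expected main obstacle is a minor one: the function $\tilde f$ is piecewise linear with breakpoints at $\lambda=\phi_i\pm 1$, so strict monotonicity may fail on flat pieces, and one should note that the lemma only asserts existence of a root (any root will do, since the resulting $\bm{\psi}(\lambda^*)$ is unique even when $\lambda^*$ is not). Assembling these pieces gives the claimed closed form for $\operatorname{P}_C(\bm{\phi})$.
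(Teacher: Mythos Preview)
Your proof is correct and follows the standard Lagrangian approach for projecting onto the intersection of a box and an affine hyperplane. Note, however, that the paper itself does not prove this lemma: it is stated with a citation to Beck's textbook \cite{beck2017first} and no argument is given in the body or the appendices, so there is no in-paper proof to compare against. One minor remark: your appeal to a strict Slater point fails in the degenerate boundary case $|V_0|=m^2$, but since all constraints are polyhedral, strong duality for the convex QP holds whenever the feasible set is nonempty, so the argument goes through without needing a strict interior point.
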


Since the scalar function $\tilde{f}(\lambda)$ is nonincreasing, its root $\lambda^*$ can be found efficiently. In this paper, the bisection method is used for the numerical computation of $\lambda^*$.


Computing the gradient of $ H $  is a little cumbersome. We provide it in Lemma~\ref{lemma_gradient_H}  and leave its proof in \cref{appe_gradient_h}.

\begin{lemma}[The gradient of $ H(\bm{\phi}) $] \label{lemma_gradient_H}
	The gradient of $ H(\bm{\phi}) $ is given by
	\begin{equation}\label{gradient_h}
		\nabla  H (\bm{\phi})=\left[
		\begin{matrix}
			f'(\phi_1)-b\phi_1 \\f'(\phi_2)-b\phi_2 \\ \cdots\\f'(\phi_{m^2}) -b\phi_{m^2}
		\end{matrix}
		\right]+\varepsilon^2\left(
		D_x^T\left[\begin{matrix}
			\mathcal{A}_1(\bm{p}_1)\\
			\mathcal{A}_1(\bm{p}_2)\\
			\cdots\\
			\mathcal{A}_1(\bm{p}_{m^2})
		\end{matrix}\right]+
		D_y^T\left[\begin{matrix}
			\mathcal{A}_2(\bm{p}_1)\\
			\mathcal{A}_2(\bm{p}_2)\\
			\cdots\\
			\mathcal{A}_2(\bm{p}_{m^2})
		\end{matrix}\right]
		\right),
	\end{equation}
	where  $ \mathcal{A}_1(\bm{p}_k) $ and $ \mathcal{A}_2(\bm{p}_k) $ are scalar components defined through
	\[
	\left[\begin{matrix}
		\mathcal{A}_1(\bm{p}_k)\\
		\mathcal{A}_2(\bm{p}_k)
	\end{matrix}\right]:=	(\gamma^2(\bm{n}_k)-a)\bm{p}_k+\gamma(\bm{n}_k)|\bm{p}_k|(I_2-\bm{n}_k\otimes\bm{n}_k)\nabla_{\bm{n}}\gamma(\bm{n}_k),\quad   k=1,2,\cdots,m^2.
	\]
\end{lemma}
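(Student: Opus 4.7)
I would differentiate $H(\bm{\phi})$ term by term from \eqref{splitting form}, isolating the anisotropic sum as the only non-trivial piece. First, using \eqref{descrete_laplace} together with \eqref{def_p}, one has
\[
\bm{\phi}^T L\bm{\phi}=(D_x\bm{\phi})^T(D_x\bm{\phi})+(D_y\bm{\phi})^T(D_y\bm{\phi})=\sum_{k=1}^{m^2}|\bm{p}_k|^2,
\]
so the $-a\tfrac{\varepsilon^2}{2}\bm{\phi}^T L\bm{\phi}$ term in \eqref{splitting form} absorbs into the anisotropic sum to give
\[
H(\bm{\phi})=\sum_{k=1}^{m^2}\bigl(f(\phi_k)-\tfrac{b}{2}\phi_k^2\bigr)+\frac{\varepsilon^2}{2}\sum_{k=1}^{m^2}\bigl(\gamma^2(\bm{n}_k)-a\bigr)|\bm{p}_k|^2.
\]
The scalar part contributes $f'(\phi_k)-b\phi_k$ componentwise, matching the first column vector in \eqref{gradient_h}.

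The heart of the proof is to compute the gradient of the local density $W(\bm{p}):=\tfrac{1}{2}(\gamma^2(\bm{n})-a)|\bm{p}|^2$ on $\mathbb{R}^d$ with $\bm{n}=\bm{p}/|\bm{p}|$. From this definition one reads off the Jacobian
\[
\frac{\partial\bm{n}}{\partial\bm{p}}=\frac{1}{|\bm{p}|}\bigl(I-\bm{n}\otimes\bm{n}\bigr),
\]
and combining it with $\nabla_{\bm{p}}|\bm{p}|^2=2\bm{p}$ via the chain rule yields, after the $1/|\bm{p}|$ and $|\bm{p}|^2$ factors cancel,
\[
\nabla_{\bm{p}}W(\bm{p})=\bigl(\gamma^2(\bm{n})-a\bigr)\bm{p}+\gamma(\bm{n})|\bm{p}|\bigl(I-\bm{n}\otimes\bm{n}\bigr)\nabla_{\bm{n}}\gamma(\bm{n}),
\]
which is exactly the column vector with entries $\mathcal{A}_1(\bm{p}),\mathcal{A}_2(\bm{p})$ defined in the statement.

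To lift $\nabla_{\bm{p}}W$ to $\nabla_{\bm{\phi}}H$, I would apply the chain rule through the linear map $\bm{\phi}\mapsto\bm{p}_k$ described by the $2\times m^2$ matrix $[\bm{e}_k^T D_x;\,\bm{e}_k^T D_y]$ from \eqref{def_p}. The $k$-th grid point then contributes $D_x^T\bm{e}_k\,\mathcal{A}_1(\bm{p}_k)+D_y^T\bm{e}_k\,\mathcal{A}_2(\bm{p}_k)$ to the gradient; summing over $k$ collects $\{\mathcal{A}_i(\bm{p}_k)\}_{k=1}^{m^2}$ into the two column vectors appearing in \eqref{gradient_h}, premultiplied by $D_x^T$ and $D_y^T$ respectively. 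Combining with the scalar contribution produces \eqref{gradient_h}.

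The only delicate point, and the thing I expect to need to argue carefully, is the singularity of $\bm{n}$ at $\bm{p}=\bm{0}$. This is not an actual obstacle: in the second term of $\nabla_{\bm{p}}W$ the singular factor $(I-\bm{n}\otimes\bm{n})\nabla_{\bm{n}}\gamma(\bm{n})$ is multiplied by $|\bm{p}|$, so $\mathcal{A}_1(\bm{p})$ and $\mathcal{A}_2(\bm{p})$ both extend continuously to the origin and vanish there under the convention $\bm{n}_k=\bm{0}$ when $\bm{p}_k=\bm{0}$ adopted in \eqref{def_nk}. Hence \eqref{gradient_h} is well defined and continuous on all of $\mathbb{R}^{m^2}$, which is precisely the motivation for that convention.
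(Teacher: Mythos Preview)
Your proposal is correct and follows essentially the same approach as the paper's own proof: split $H$ into the scalar part and the anisotropic part via the identity $\bm{\phi}^T L\bm{\phi}=\sum_k|\bm{p}_k|^2$, compute $\nabla_{\bm{p}_k}$ of the anisotropic density using the Jacobian $\partial\bm{n}/\partial\bm{p}=|\bm{p}|^{-1}(I-\bm{n}\otimes\bm{n})$, and pull back through the linear map $\bm{\phi}\mapsto\bm{p}_k$ via the transpose of $[\bm{e}_k^T D_x;\,\bm{e}_k^T D_y]$. Your closing remark on continuity at $\bm{p}_k=\bm{0}$ is extra relative to this lemma---the paper handles that point separately in \cref{lemma_rearrange}---but it is correct and does no harm here.
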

Using these two lemmas, we can compute the ESC through \cref{alg:algorithm1} with a prescribed threshold $\bar\tau$ for the step size. 

However, the theoretical value of $\bar{\tau}$ given in \eqref{tau_threshold} is too restricted due to its complexity  and may not be useful 
in practice.
We adopt a more practical strategy for variably selecting $\tau$ as follows \cite{sun2015convergent,li2016douglas,bian2021three}:

{\it We initialize the algorithm with a large $\tau_0$. If $ \tau>\bar{\tau} $, and the iteration satisfies either $ \|\bm{y}^{(n+1)}-\bm{y}^{(n)}\|>c_0/n $ or $ \|\bm{y}^{(n+1)}\|> c_1  $ for some predefined $ c_0,~c_1 > 0 $, then we reduce $\tau$ by a factor $\frac12$. }

In the worst case, $ \tau\leqslant\bar{\tau} $ after finitely many decreases which ensures the convergence of the sequence by \cref{lemma_globalconvergence}. Otherwise, we have $ \|\bm{y}^{(n+1)}-\bm{y}^{(n)}\|\leqslant c_0/n $ and $ \|\bm{y}^{(n+1)}\|\leqslant c_1  $ for all sufficiently large $ n $. 
 Combining the fact that $ \bm{z}^{(n+1)} $ is bounded and \eqref{first_optimal}, we know that the sequence $\left\{\left(\bm{x}^{(n)}, \bm{y}^{(n)}, \bm{z}^{(n)}\right)\right\}_{n \geqslant 1}$ is bounded which leads to the existence of cluster points. In addition, \eqref{step3_alg1} and \eqref{first_optimal} imply that $\|\bm{y}^{(n+1)}-\bm{z}^{(n+1)}\|
 \leqslant(1+\tau L_{_{F}})\|\bm{y}^{(n+1)}-\bm{y}^{(n)}\| $, leading to the convergence of the sequence by \eqref{optimal_condtion}.
 In fact, this practical step-size strategy can not only accelerate the convergence speed numerically but also have the potential to avoid the iteration getting stuck at `bad' critical points \cite{li2016douglas}.


The implemented DYS algorithm for computing the ESC is given in \cref{alg:algorithm3}.
%
%
%
%
%
%

\begin{algorithm}
	\caption{DYS for \eqref{splitting form} with dynamic step size}
	\label{alg:algorithm3}
	\begin{algorithmic}[1]
		\STATE{Initialize $ \bm{y}^{(0)} $, $ \tau=\tau_0 $, $ \bm{x}^{(0)}=\bm{y}^{(0)}+\tau(a\varepsilon^2L+bI_{m^2})\bm{y}^{(0)} $ and parameters $(c_0, c_1) $.}
		
		\FOR{$ n=0,1,\cdots $,}
		\STATE{\label{step1_alg3} Use FFT to solve $ \frac{\bm{y}^{(n+1)}-\bm{x}^{(n)}}{\tau}=-(a\varepsilon^2L+bI_{m^2})\bm{y}^{(n+1)} $ for $ \bm{y}^{(n+1)} $.}
		
		\STATE{\label{step2_alg3}  $ \bm{z}^{(n+1)}=\operatorname{P}_C\left(2 \bm{y}^{(n+1)}-\tau \nabla H\left(\bm{y}^{(n+1)}\right)-\bm{x}^{(n)}\right) $.}
		
		\STATE{\label{step3_alg3}  $ \bm{x}^{(n+1)}=\bm{x}^{(n)}+\left(\bm{z}^{(n+1)}-\bm{y}^{(n+1)}\right) $.}
		
		\STATE{If a termination criterion is met, break.}
		
		\STATE{Update $\tau$: if  $ \|\bm{y}^{(n+1)}-\bm{y}^{(n)}\|> c_0/n $ or $ \|\bm{y}^{(n+1)}\|> c_1  $, then $ \tau\leftarrow\tau/2 $.}
		\ENDFOR
		\RETURN  $ \bm{z}^{(n+1)} $.
	\end{algorithmic}
\end{algorithm}

\subsection{Convergence property}  \label{section6}
The original double well potential $ f(\phi)=\frac{(\phi^2-1)^2}{4} $ does not have Lipschitz continuous gradient. This presents difficulties in the convergence analysis of the algorithm due to the violation of \cref{assumption}. A commonly used modification is to truncate the double-well potential $ f(\phi) $ and concatenate it with a quadratic growth function \cite{shen2010numerical}, i.e.,
\begin{equation} \label{equ:truef}
\hat{f}(\phi)= \begin{cases}\frac{3 M^2-1}{2} \phi^2-2 M^3 \phi+\frac{1}{4}\left(3 M^4+1\right), & \phi>M, \\ \frac{1}{4}\left(\phi^2-1\right)^2, & \phi \in[-M, M], \\ \frac{3 M^2-1}{2} \phi^2+2 M^3 \phi+\frac{1}{4}\left(3 M^4+1\right), & \phi<-M,\end{cases}
\end{equation}
for some positive number $M\geqslant1$. Then there exists a constant $ L_f $ such that 
\begin{equation}\label{smoothnness_doublewell}
	\max_{\phi\in\mathbb{R}}|\hat{f}''(\phi)|\leqslant L_f.
\end{equation}


Sufficient numerical experiments (\cite{wise2007solving,shen2010numerical,chen2019fast,cheng2020weakly}) show that the energy functional with this modified potential yields bounded equilibrium solutions with bounds nearby $\pm1$. Moreover, the box constraint in \cref{discrete problem} also guarantees the boundedness of the equilibrium profiles.
This suggests that we can take $ M=1 $ throughout this paper.
Nevertheless, numerical experiments indicate that the proposed algorithm also works well even if we take the original double-well potential $ f(\phi)=\frac{1}{4}(\phi^2-1)^2 $. 


As a major technical result, we will show that the sequence generated by the DYS algorithm associated with the splitting \cref{splitting form} converges to a critical point of \cref{discrete problem}. To this end, we verify the assumptions of \cref{lemma_globalconvergence} for the ESC problem. 
The first condition is the existence of a clustering point, which can be ensured by the boundedness of the sequence generated by the DYS algorithm. A sufficient condition for the boundedness has been given in~\cite{bian2021three}, which is unfortunately not satisfied for the ESC problem. Hereby we show the boundedness in Theorem~\ref{lemma_boundness} by further exploring the particular structure of the ESC problem and mimicing the analysis in~\cite{bian2021three}. Its proof is provided in \cref{appe_lemma_bound}.


\begin{theorem}[Boundedness]\label{lemma_boundness}
	Let \cref{assumption} hold and the parameter $\tau$ in \cref{alg:algorithm1}  satisfy $\mathcal{D}(\tau)>0$. Suppose that the function $F$ is bounded below and coercive (i.e., $ \|F(\bm{x})\|\rightarrow+\infty $, if $ \|\bm{x}\|\rightarrow\infty $), the function $ G $ is an indicator function of a bounded set, and the function $ H $ is a concave function. Then, the sequence $\left\{\left(\bm{x}^{(n)}, \bm{y}^{(n)}, \bm{z}^{(n)}\right)\right\}_{n \geqslant 1}$ generated by  \cref{alg:algorithm1} is bounded.
\end{theorem}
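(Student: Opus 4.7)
The strategy is to exploit the descent of the merit function $\Theta_\tau$ together with the boundedness of $\operatorname{dom}(G)$: after substituting a clean stationarity relation into $\Theta_\tau$, I isolate a coercive quadratic term in $\|y^{(n)}-z^{(n)}\|$ whose control, via the monotone decrease of $\Theta_\tau$, yields the required bounds.

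First I would note that $z^{(n)}\in\operatorname{dom}(G)$, which is bounded by hypothesis, so $\{z^{(n)}\}_{n\geqslant 1}$ is automatically bounded. The first-order optimality \eqref{first_optimal} reads $x^{(n)}=y^{(n+1)}+\tau\nabla F(y^{(n+1)})$, and combining with the update \eqref{step3_alg1} yields the clean identity
\[
x^{(n)}=z^{(n)}+\tau\nabla F(y^{(n)}),\qquad n\geqslant 1,
\]
so by the Lipschitz continuity of $\nabla F$ it suffices to bound $\{y^{(n)}\}$.

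Next, I would substitute this identity into the definition \eqref{new_energy} of the merit function. A short algebraic simplification --- in which the two squared-norm terms in \eqref{new_energy} partially cancel --- yields
\[
\Theta_\tau(x^{(n)},y^{(n)},z^{(n)})=F(y^{(n)})+H(y^{(n)})+G(z^{(n)})+\tfrac{1}{2\tau}\|y^{(n)}-z^{(n)}\|^2-(y^{(n)}-z^{(n)})^T[\nabla F(y^{(n)})+\nabla H(y^{(n)})].
\]
Two one-sided estimates then disentangle the remaining cross terms: the concavity of $H$ gives the above-tangent inequality $H(y^{(n)})-(y^{(n)}-z^{(n)})^T\nabla H(y^{(n)})\geqslant H(z^{(n)})$, and the classical descent lemma for the $L_F$-smooth $F$ gives $F(y^{(n)})-(y^{(n)}-z^{(n)})^T\nabla F(y^{(n)})\geqslant F(z^{(n)})-\tfrac{L_F}{2}\|y^{(n)}-z^{(n)}\|^2$. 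Combining,
\[
\Theta_\tau(x^{(n)},y^{(n)},z^{(n)})\geqslant F(z^{(n)})+H(z^{(n)})+G(z^{(n)})+\Bigl(\tfrac{1}{2\tau}-\tfrac{L_F}{2}\Bigr)\|y^{(n)}-z^{(n)}\|^2.
\]

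Finally, I would close via the descent inequality \eqref{descent_lemma}. Since $F$ is bounded below, $H$ is continuous on the compact set $\operatorname{dom}(G)$, and $G(z^{(n)})=0$, the first three terms on the right are uniformly bounded below; and $\mathcal{D}(\tau)>0$ forces $\tau<1/L_F$ via the threshold \eqref{tau_threshold}, making the quadratic coefficient strictly positive. Then $\Theta_\tau^{(n)}\leqslant\Theta_\tau^{(0)}$ gives a uniform bound on $\|y^{(n)}-z^{(n)}\|$, hence on $\{y^{(n)}\}$ by the triangle inequality with the bounded $\{z^{(n)}\}$, and hence on $\{x^{(n)}\}$ by the Lipschitz continuity of $\nabla F$. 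The coercivity of $F$ listed among the hypotheses is not strictly required by this route; it would instead supply a fallback (via the standard inequality $\|\nabla F(y)\|^2\leqslant 2L_F(F(y)-\inf F)$ and a Young splitting) that bounds $F(y^{(n)})$ directly. The principal technical hurdle is the algebraic simplification of $\Theta_\tau$ after the substitution $x^{(n)}=z^{(n)}+\tau\nabla F(y^{(n)})$, where partial cancellations between the two squared-norm terms leave precisely the cross-product structure susceptible to the concavity of $H$ and the $L_F$-smoothness of $F$.
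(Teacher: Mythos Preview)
Your proof is correct and follows essentially the same roadmap as the paper's: bound $z^{(n)}$ via $\operatorname{dom}(G)$, substitute $x^{(n)}=z^{(n)}+\tau\nabla F(y^{(n)})$ into $\Theta_\tau$, simplify to the expression $F(y)+H(y)+\tfrac{1}{2\tau}\|y-z\|^2-(y-z)^T(\nabla F(y)+\nabla H(y))$, and then use concavity of $H$ together with the monotone decrease of $\Theta_\tau$. The paper arrives at precisely the same intermediate expression (its ``second equality'' in Appendix~B is your formula rewritten via $\|x-y\|^2-\|x-z\|^2$).

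The one genuine divergence is in the handling of the $-(y-z)^T\nabla F(y)$ cross term. The paper groups it with $\|x-z\|^2=\tau^2\|\nabla F(y)\|^2$ and invokes the inequality $F(y)-\tfrac{1}{2L_F}\|\nabla F(y)\|^2\geqslant\zeta^*$, obtaining a lower bound of the form $(1-\tau L_F)F(y^{(n)})+\tfrac{1}{2\tau}\|x^{(n)}-y^{(n)}\|^2+\text{bounded}$; it then needs the coercivity of $F$ to conclude that $y^{(n)}$ is bounded. You instead apply the descent lemma to pass from $F(y)$ to $F(z)$, which leaves $(\tfrac{1}{2\tau}-\tfrac{L_F}{2})\|y-z\|^2$ as the coercive term and lets you bound $y^{(n)}$ by the triangle inequality with the already-bounded $z^{(n)}$---as you correctly observe, coercivity is then superfluous. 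Both routes rely on $\tau<1/L_F$ (implicit in the threshold \eqref{tau_threshold}); yours is marginally cleaner and uses one fewer hypothesis.
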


Based on \cref{lemma_globalconvergence} and \cref{lemma_boundness}, we can summarize the conditions that need to be verifed as follows:
\begin{enumerate}[(i)] 
	\item\label{condition1} $ F $ is bounded below, coercive and has a Lipschitz continuous gradient.
	
	\item\label{condition2} $ G $ is an indicator function of a nonempty closed convex set.
	
	\item\label{condition3} $ H $ is a concave function and has a Lipschitz continuous gradient;
	
	\item\label{condition4} $ F $, $ G $ and $ H $ are semi-algebraic.
\end{enumerate}	

For \eqref{condition1}, since $ L $ is positive definite, it follows that $ F $ is bounded below. The coercivity of $ F $ is obvious due to its quadratic form. Moreover, because $ \|\nabla^2 F(\bm{\phi})\|_2\leqslant a\varepsilon^2\|L\|_2+b $, $ F $ has a Lipschitz continuous gradient.

\eqref{condition2} is a direct consequence of the fact that the bounded set $ C $ is the intersection of a box and a hyperplane. 

To verify \eqref{condition3}, we first split $H$ into $H(\bm{\phi})=h_1(\bm{\phi})+h_2(\bm{\phi})$, with $h_1$ being a function solely depending on $\bm{\phi}$ and $h_2$ depending only on its first-order finite differences: 
    \begin{equation}\label{eq_h_1}
        	h_1(\bm{\phi})=\sum_{k=1}^{m^2}\left(\hat{f}(\phi_k)-\frac{b}{2}\phi_k^2\right),\quad h_2(\bm{\phi})=\sum_{k=1}^{m^2}\frac{\varepsilon^2}{2}(\gamma^2(\bm{n}_k)-a)|\bm{p}_k|^2,
    \end{equation}
	where we have used the identity ${{\bm{\phi} }^{T}}L\bm{\phi}=\sum_{k=1}^{m^2}|\bm{p}_k|^2$.

An insightful observation is that the summands in $h_2$ can always be written in terms of some fractional functions if $\gamma^2$ is a polynomial. For example, if $\gamma(\bm{n})=n_1$, then  $\gamma^2(\bm{n})|\bm{q}|^2=\frac{q_1^2|\bm{q}|^2}{|\bm{q}|^2}$. Thus to analyze $h_2$, it suffices to study the function $\zeta_{_l}$ of the following form:
\begin{equation} \label{equ:zeta}
\zeta_{_l}(\bm{x})= \begin{cases}\frac{\eta_{_{l+2}}(\bm{x})}{|\bm{x}|^{l}}, & \bm{x}\in\mathbb{R}^2\backslash\{\bm{0}\}, \\ 0, & \bm{x}=\bm{0},\end{cases}
\end{equation}
where $ \eta_{_{l+2}}(\bm{x}) $ is a homogeneous polynomial with degree $ l+2 $. This function has the following properties whose proof is given in \cref{appe_rearrange}.

\begin{lemma}\label{lemma_rearrange}
	Let $\zeta_{_l}(\bm{x})$ be defined by \eqref{equ:zeta}.  Then $\nabla_{\bm{x}} \zeta_{_l}$ is continuous, and  there exists a constant $ C_l $ such that $ \left| \frac{\partial^2\zeta_{_l}}{\partial x_i\partial x_j} \right|\leqslant C_l $ for any  $ \bm{x}\in\mathbb{R}^2\backslash\{\bm{0}\} $ and $ i,j=1,2$.
\end{lemma}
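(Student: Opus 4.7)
The approach I would take exploits the positive homogeneity of $\zeta_{_l}$. Since $\eta_{_{l+2}}$ is homogeneous of degree $l+2$, a direct computation gives $\zeta_{_l}(t\bm{x})=t^2\zeta_{_l}(\bm{x})$ for any $t>0$, so $\zeta_{_l}$ is positively homogeneous of degree $2$. Equivalently, writing $\bm{x}=r\bm{\omega}$ with $r=|\bm{x}|$ and $\bm{\omega}\in S^1$, we have $\zeta_{_l}(\bm{x})=r^2 g(\bm{\omega})$ with $g(\bm{\omega})=\eta_{_{l+2}}(\bm{\omega})$, a smooth function on the compact unit circle.

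First I would handle the gradient. For $\bm{x}\ne\bm{0}$, $\zeta_{_l}$ is a smooth rational function with nonvanishing denominator, so $\nabla\zeta_{_l}$ exists and is smooth there. Differentiating the homogeneity relation $\zeta_{_l}(t\bm{x})=t^2\zeta_{_l}(\bm{x})$ in $\bm{x}$ shows that each $\partial_i\zeta_{_l}$ is positively homogeneous of degree $1$, which yields the pointwise bound $|\partial_i\zeta_{_l}(\bm{x})|\leqslant K_l|\bm{x}|$ with $K_l=\max_{\bm{\omega}\in S^1}|\partial_i\zeta_{_l}(\bm{\omega})|$ (finite by compactness of $S^1$). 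At the origin, the definition $\zeta_{_l}(\bm{0})=0$ combined with $|\zeta_{_l}(\bm{x})|\leqslant \|g\|_\infty|\bm{x}|^2$ shows that $\zeta_{_l}$ is differentiable at $\bm{0}$ with $\nabla\zeta_{_l}(\bm{0})=\bm{0}$. The bound above then gives $\nabla\zeta_{_l}(\bm{x})\to \bm{0}=\nabla\zeta_{_l}(\bm{0})$ as $\bm{x}\to\bm{0}$, establishing continuity on all of $\mathbb{R}^2$.

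For the second partial derivatives, I would again differentiate the homogeneity identity: $\partial^2_{ij}\zeta_{_l}$ is positively homogeneous of degree $0$ on $\mathbb{R}^2\setminus\{\bm{0}\}$, i.e.\ constant along each ray from the origin. Because $\partial^2_{ij}\zeta_{_l}$ is continuous on $\mathbb{R}^2\setminus\{\bm{0}\}$ (smoothness of the rational expression) and constant along rays, its values are determined by its restriction to the unit circle $S^1$, which is compact. Setting $C_l=\max_{i,j}\max_{\bm{\omega}\in S^1}|\partial^2_{ij}\zeta_{_l}(\bm{\omega})|$ immediately gives the uniform bound $|\partial^2_{ij}\zeta_{_l}(\bm{x})|\leqslant C_l$ for all $\bm{x}\ne\bm{0}$.

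I do not anticipate a serious obstacle; the proof is essentially a homogeneity-plus-compactness argument. The only subtle point is justifying differentiability at the origin so that the claimed continuity of $\nabla\zeta_{_l}$ makes sense there, but this follows cleanly from the quadratic vanishing $\zeta_{_l}(\bm{x})=O(|\bm{x}|^2)$. Everything else reduces to invoking Euler's relation for homogeneous functions and the extreme value theorem on $S^1$.
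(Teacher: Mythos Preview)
Your proof is correct and takes a genuinely different route from the paper. The paper's argument in Appendix~D proceeds by explicit computation: it writes out the quotient-rule expressions for $\partial_i\zeta_{_l}$ and $\partial^2_{ij}\zeta_{_l}$, observes that each term is a ratio of a homogeneous polynomial of degree $m$ to $|\bm{x}|^m$, and then bounds such ratios via Young's inequality (which gives $|x_1^r x_2^{m-r}|\leqslant |\bm{x}|^m$). Your approach instead abstracts away the formulas entirely by recognizing that $\zeta_{_l}$ is positively homogeneous of degree~$2$, so its first and second partials are automatically homogeneous of degrees~$1$ and~$0$, and then invokes compactness of $S^1$ via the extreme value theorem. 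The payoff of your method is brevity and conceptual clarity---no derivative formulas need be written down, and the argument would generalize immediately to $\mathbb{R}^d$. The paper's computation, on the other hand, makes the constants more explicit (in terms of the coefficients of $\eta_{_{l+2}}$) and does not rely on recalling Euler's relation. Both are perfectly valid; yours is the cleaner of the two.
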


Using \eqref{equ:zeta}, if $\gamma^2$ is a $ (q-1) $-degree polynomial, we can represent $ h_2(\bm{\phi}) $ as
	\begin{equation}\label{h2represent_p}
h_2(\bm{\phi})=\sum_{k=1}^{m^2}\frac{\varepsilon^2}{2}\sum_{l=0}^{q-1}\zeta_{_l}(\bm{p}_k).
	\end{equation}
Then we can verify the condition \eqref{condition3} in the following lemma.

\begin{lemma}[H is concave and has a Lipschitz continuous gradient]\label{h_smoothness}
	Let $\gamma^2(\bm{n})$ be a polynomial. Then $ H $ has a Lipschitz continuous gradient. Moreover, if the splitting parameter $ a\geqslant\max_l \|\nabla^2_{\bm{x}}\zeta_{_{l}}\|_2 $ with $\zeta_{_{l}}$'s defined in \eqref{h2represent_p}, and $ b\geqslant L_f
 $,  then $ H(\bm{\phi}) $ is a concave function.
\end{lemma}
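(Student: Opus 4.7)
The plan is to decompose $H$ via the splitting $H = h_1 + h_2$ already introduced in \eqref{eq_h_1} and to verify the Lipschitz-gradient and concavity properties for each summand separately; both properties pass to $H$ under finite summation.

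For $h_1(\bm\phi) = \sum_{k=1}^{m^2}(\hat f(\phi_k) - \tfrac{b}{2}\phi_k^2)$, the Hessian is the diagonal matrix $\diag(\hat f''(\phi_k) - b)$. By \eqref{smoothnness_doublewell}, each diagonal entry is bounded in absolute value by $L_f + b$, giving Lipschitz continuity of $\nabla h_1$ with constant $L_f + b$. Under the extra hypothesis $b \geq L_f$, each diagonal entry satisfies $\hat f''(\phi_k) - b \leq L_f - b \leq 0$, so the Hessian is negative semidefinite and $h_1$ is concave.

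For $h_2$, I would use the representation $h_2(\bm\phi) = \sum_{k=1}^{m^2} \tfrac{\varepsilon^2}{2}\sum_{l=0}^{q-1} \zeta_l(\bm p_k)$ from \eqref{h2represent_p} together with the linear relation $\bm p_k = J_k \bm\phi$ read off from \eqref{def_p}, where $J_k$ denotes the fixed $2 \times m^2$ matrix formed from the $k$-th rows of $D_x$ and $D_y$. By \cref{lemma_rearrange}, $\nabla_{\bm x} \zeta_l$ is continuous on $\mathbb{R}^2$ and the second partial derivatives of $\zeta_l$ are uniformly bounded on $\mathbb{R}^2 \setminus \{\bm 0\}$. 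Integrating the Hessian bound along line segments that avoid the origin (and using continuity of $\nabla_{\bm x}\zeta_l$ at the origin for segments that would cross it) upgrades this pointwise bound into a global Lipschitz bound for $\nabla_{\bm x} \zeta_l$ on $\mathbb{R}^2$. The chain rule then yields $\nabla h_2(\bm\phi) = \tfrac{\varepsilon^2}{2}\sum_k J_k^T \sum_l \nabla_{\bm x}\zeta_l(\bm p_k)$, and since composition with the linear $J_k$'s and finite summation preserve the Lipschitz property, $\nabla h_2$ is Lipschitz.

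The main obstacle is the concavity of $h_2$, where the hypothesis on $a$ must be invoked. On the open set where every $\bm p_k \neq \bm 0$, the Hessian is
\[
\nabla^2 h_2(\bm\phi) = \frac{\varepsilon^2}{2}\sum_{k=1}^{m^2} J_k^T M_k J_k, \qquad M_k := \sum_{l=0}^{q-1} \nabla^2_{\bm x} \zeta_l(\bm p_k),
\]
so it suffices to show $M_k \preceq 0$ for each $k$. Using the identity $\sum_l \zeta_l(\bm p) = (\gamma^2(\bm p/|\bm p|) - a)|\bm p|^2$, I would exploit the $-a|\bm p|^2$ contribution to the Hessian together with the hypothesis $a \geq \max_l \|\nabla^2_{\bm x} \zeta_l\|_2$ controlling the remaining pieces to deduce $M_k \preceq 0$; then $J_k^T M_k J_k \preceq 0$ and summation in $k$ preserves negative semidefiniteness. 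A final density/continuity argument extends the conclusion across the singular set $\{\bm\phi : \bm p_k = \bm 0 \text{ for some } k\}$, using that $\nabla h_2$ is already globally Lipschitz so that concavity of a dense open subdomain transfers to the whole space.
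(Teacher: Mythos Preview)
Your treatment of $h_1$ and of the Lipschitz continuity of $\nabla h_2$ follows the paper's line exactly: diagonal Hessian for $h_1$, and for $h_2$ the chain rule through $\bm p_k = J_k\bm\phi$ together with the uniform bound on second partials of $\zeta_l$ from \cref{lemma_rearrange}, applied along segments with the origin handled separately. That part is fine.

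The gap is in your concavity argument for $h_2$. You write that you would ``exploit the $-a|\bm p|^2$ contribution to the Hessian together with the hypothesis $a \geq \max_l\|\nabla^2_{\bm x}\zeta_l\|_2$ controlling the remaining pieces to deduce $M_k \preceq 0$.'' This does not go through as stated. In the representation \eqref{h2represent_p} the $\zeta_l$'s already encode $(\gamma^2(\bm n)-a)|\bm p|^2$, so the $-a|\bm p|^2$ piece is absorbed into $\zeta_0$; you cannot extract it as a separate $-2aI$ contribution and \emph{also} invoke the hypothesis on the same $\zeta_l$'s for the ``remaining pieces,'' since those remaining pieces are not the $\zeta_l$'s of the hypothesis. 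And even if you did separate, a $\max_l$ bound gives $\|\sum_l\nabla^2\zeta_l\|\le q\cdot a$, which a single $-2aI$ does not dominate when $q>2$. So the passage from the stated hypothesis to $M_k\preceq 0$ is missing a genuine idea.

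The paper proceeds differently at this point: instead of bounding the aggregate $M_k$, it works term by term, reducing (via composition with the affine maps $\bm\phi\mapsto\bm p_k$ and the fact that sums of convex functions are convex) to showing that each scalar function $\hat h_2(\bm x):=a|\bm x|^2-\zeta_l(\bm x)$ is convex on $\mathbb R^2$. On $\mathbb R^2\setminus\{\bm 0\}$ this follows because its Hessian $aI-\nabla^2_{\bm x}\zeta_l$ is positive definite under the hypothesis; the extension across the origin uses the already-established continuity of $\nabla_{\bm x}\zeta_l$ together with an external convexity lemma (Lemma~2.4 of \cite{cheng2020weakly}). To close your argument you need either this term-by-term reduction or some other mechanism that actually connects a $\max_l$ hypothesis to negative semidefiniteness of $M_k$.
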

\begin{proof} 
It suffices to show that both $ h_1$ and $h_2$  are concave and have Lipschitz continuous gradients. 
Since  $ \nabla^2h_1(\bm{\phi}) $ is a diagonal matrix whose $ k $-th entry is $ \hat{f}''(\phi_k)-b $. By \eqref{smoothnness_doublewell}, we have $ h_1(\bm{\phi}) $ is concave and has a Lipschitz continuous gradient.


The differential of $ h_2(\bm{\phi}) $ can be calculated through chain rule as
        \begin{equation}\label{grad_phi}
        \nabla h_2(\bm{\phi})
	=\frac{\varepsilon^2}{2}\sum_{k=1}^{m^2}\sum_{l=0}^{q-1}
	\left[\begin{matrix}
		D_x^T & D_y^T
	\end{matrix}\right]
	\left[\begin{matrix}
		\bm{e_k} & \\& \bm{e_k} 
	\end{matrix}\right]
	\nabla_{\bm{x}}\zeta_{_l}(\bm{p}_k).  
        \end{equation}
	For any $ \bm{\phi}, \bm{\bar{\phi}}\in \mathbb{R}^{m^2} $, we can apply the notations in \eqref{def_p} to define their corresponding discrete gradients $\bm{p},\bm{\bar{p}}$ and $\bm{p}_k,\bm{\bar{p}}_k$.
	By \cref{descrete_laplace}, the following estimate can be established:
	\begin{equation}\label{pk_norm}
		\|\bm{p}_k-\bm{\bar{p}}_k\|_2
		\leqslant\left\|	\left[\begin{matrix}
			\bm{e_k}^T &  \\& \bm{e_k}^T 
		\end{matrix}\right]\right\|_2~
		\left\|	\left[\begin{matrix}
			D_x\\D_y
		\end{matrix}\right] \right\|_2
		\|\bm{\phi}-\bm{\bar{\phi}}\|_2
		=\|L\|_2^{\frac{1}{2}}\|\bm{\phi}-\bm{\bar{\phi}}\|_2.
	\end{equation}
 
     To show $ h_2 $ has a Lipschitz continuous gradient, we have an estimate that
	\begin{equation}\label{h2_identity}
		\begin{aligned}
			\|\nabla h_2(\bm{\phi})-\nabla h_2(\bm{\bar{\phi}})\|_2
		  \leqslant&\frac{q\varepsilon^2}{2}\|L\|^{\frac{1}{2}}_2\cdot\max_l
            \bigg\|\sum_{k=1}^{m^2}\left[\begin{matrix}
		  \bm{e_k} & \\& \bm{e_k} 
	       \end{matrix}\right]  \bigg(
	        \nabla_{\bm{x}}\zeta_{_l}(\bm{p}_k)-\nabla_{\bm{x}}\zeta_{_l}(\bar{\bm{p}}_k)\bigg)\bigg\|_2\\
			=&\frac{q\varepsilon^2}{2}\|L\|_2^{\frac{1}{2}}\cdot\max_{k,i,l}\bigg|\frac{\partial \zeta_{_l}}{\partial x_{i}}(\bm{p}_k)-\frac{\partial \zeta_{_l}}{\partial x_{i}}(\bm{\bar{p}}_k)\bigg|.
		\end{aligned}
	\end{equation}
	
	On the one hand, if the line segment from $ \bm{p}_k $ to $ \bm{\bar{p}_k} $ does not pass through the origin $ \bm{0} $, then by mean value theorem, \cref{lemma_rearrange} and \eqref{pk_norm}, we have that
	\begin{equation}\label{idetity_h2_1}
		\bigg|\frac{\partial \zeta_{_l}}{\partial x_{i}}(\bm{p}_k)-\frac{\partial \zeta_{_l}}{\partial x_{i}}(\bm{\bar{p}}_k)\bigg|
		\leqslant C_l~\|\bm{p}_k-\bm{\bar{p}}_k\|_2\leqslant C_l~\|L\|_2^{\frac{1}{2}}~\|\bm{\phi}-\bm{\bar{\phi}}\|_2.
	\end{equation}
	
	On the other hand, if the line segment from $ \bm{p}_k $ to $ \bm{\bar{p}_k} $ pass through the origin $ \bm{0} $, we have $ \|\bm{p}_k\|_2+\|\bm{\bar{p}}_k\|_2=\|\bm{p}_k-\bm{\bar{p}}_k\|_2 $. Again by mean value theorem, it follows
	\begin{equation}\label{idetity_h2_2}
		\begin{aligned}
			\bigg|\frac{\partial \zeta_{_l}}{\partial x_{i}}(\bm{p}_k)-\frac{\partial \zeta_{_l}}{\partial x_{i}}(\bm{\bar{p}}_k)\bigg|
			\leqslant&
			\bigg|\frac{\partial \zeta_{_l}}{\partial x_{i}}(\bm{p}_k)-\frac{\partial \zeta_{_l}}{\partial x_{i}}(\bm{0})\bigg|
			+\bigg|\frac{\partial \zeta_{_l}}{\partial x_{i}}(\bm{0})-\frac{\partial \zeta_{_l}}{\partial x_{i}}(\bm{\bar{p}}_k)\bigg|	\\
			\leqslant& C_l(\|\bm{p}_k\|_2+\|\bm{\bar{p}}_k\|_2)
            =C_l~\|L\|_2^{\frac{1}{2}}~\|\bm{\phi}-\bm{\bar{\phi}}\|_2.
		\end{aligned}
	\end{equation}

 Combining \eqref{h2_identity}, \eqref{idetity_h2_1} and \eqref{idetity_h2_2}, we obtain
	\[
	\|\nabla h_2(\bm{\phi})-\nabla h_2(\bm{\bar{\phi}})\|_2\leqslant \frac{Cq\varepsilon^2}{2} \|L\|_2~\|\bm{\phi}-\bm{\bar{\phi}}\|_2,
	\]
	where $ C=\max_{l}C_l $. Hence, $ h_2 $ has a Lipschitz continuous gradient. 
	
    
    At last, we prove $ -h_2 $ is convex. Since the composition of a convex function and an affine function is still convex, by \eqref{eq_h_1} we only need to show $(a-\gamma^2(\bm{n}_k))|\bm{p}_k|^2$ is convex with respect to $\bm{p}_k$. Because $\gamma^2$ is a polynomial function and the summations of convex functions are still convex, it suffices to prove $\hat{h}_2(\bm{x}):=a|\bm{x}|^2-\zeta_{_l}(\bm{x})$ is convex for any $\bm{x}\in\mathbb{R}^2$. This is easily verified since its Hessian matrix $\nabla^2_{\bm{x}}\hat{h}_2=aI-\nabla^2_{\bm{x}}\zeta_{_{l}}$ is 
    positive-definite in $\mathbb{R}^2\backslash\{\bm{0}\}$.
    Combining the continuity of $\nabla_{\bm{x}}\hat{h}_2$ and Lemma 2.4 of \cite{cheng2020weakly}, we obtain that $\hat{h}_2$ is convex which implies $h_2$ is concave.
\end{proof}

\begin{remark}
	In this lemma, we have assumed  $\gamma^2$ is a polynomial. This form includes a broad class of anisotropy functions:
	\begin{enumerate}
		\item A function of the form: $\gamma(\bm{n})=\alpha_0+\sum_{i=1}^{d}\alpha_{_{1,i}}n_i+\sum_{i=1}^{d}\alpha_{_{2,i}}n_i^2+\cdots$. This includes two-, three-, four-, six-fold anisotropy functions \cite{wise2007solving,cheng2020weakly}. 
		
		\item  Riemannian metric form \cite{deckelnick2005computation}: 
		$
		\gamma(\mathbf{n})= \sqrt{R \mathbf{n} \cdot \mathbf{n}}
		$
		where the matrix $ R $ is a symmetric positive definite matrix. This expression includes ellipsoidal anisotropy \cite{zhao2020parametric} and some Riemannian metric anisotropy as its particular cases \cite{jiang2019sharp}.
	\end{enumerate}
\end{remark}

\begin{remark}
    The condition $ a\geqslant\max_l \|\nabla^2_{\bm{x}}\zeta_{_{l}}\|_2 $  can be relaxed for $\gamma(\bm{n})$ with given explicit expressions. For example, an estimate for four-fold anisotropy \eqref{four-fold} has been established in Ref.~\cite{cheng2020weakly}. In practice, $a$ can also be adjusted by a close observation in the optimality conditions or numerical convergence trends.
\end{remark}


Finally, we verify the condition \eqref{condition4}.
\begin{lemma}[Semi-algebraic function]
	Let $\gamma^2$ be a polynomial. Then $ F $, $ G $ and $ H $ are semi-algebraic.
\end{lemma}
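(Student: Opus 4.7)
The plan is to verify semi-algebraicity by exploiting the closure properties of the semi-algebraic class: finite sums, compositions with affine maps, and indicators of semi-algebraic sets are all semi-algebraic; equivalently, it suffices to exhibit the graph of each function as a finite union of sets defined by polynomial equalities and inequalities.

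For $F$ this is immediate since $F$ is a polynomial in the components of $\bm{\phi}$. For $G$, the set $C$ in \eqref{setC} is cut out by the linear equality $\bm{1}^T\bm{\phi}=V_0$ together with the box inequalities $-1\leqslant\phi_k\leqslant 1$, so $C$ is semi-algebraic and hence $G=\delta_C$ is semi-algebraic.

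For $H$, I split $H=h_1+h_2$ as in \eqref{eq_h_1}. The truncated potential $\hat f$ in \eqref{equ:truef} is piecewise polynomial on the three semi-algebraic pieces defined by $\phi>M$, $-M\leqslant\phi\leqslant M$, and $\phi<-M$; hence $\hat f$ is semi-algebraic, and so is the separable sum $h_1$. For $h_2$, I would use the representation \eqref{h2represent_p} and note that each $\bm{p}_k$ depends linearly on $\bm{\phi}$; by closure under finite sums and affine precomposition it suffices to prove that each $\zeta_l$ in \eqref{equ:zeta} is semi-algebraic.

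The main obstacle is the denominator $|\bm{x}|^l$, which for odd $l$ is not a polynomial, so the naive defining equation $y\,|\bm{x}|^l=\eta_{l+2}(\bm{x})$ contains a square root. I would remove this by squaring: for $\bm{x}\neq\bm{0}$ the graph of $\zeta_l$ coincides with
\[
\{(\bm{x},y)\in\mathbb{R}^3:\ x_1^2+x_2^2>0,\ y^2(x_1^2+x_2^2)^l=\eta_{l+2}(\bm{x})^2,\ y\,\eta_{l+2}(\bm{x})\geqslant 0\},
\]
where the last inequality discards the spurious sign-flipped branch introduced by squaring (noting $|\bm{x}|^l>0$ on the domain). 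Augmenting with the single point $(\bm{0},0)$ coming from the convention in \eqref{equ:zeta} yields the entire graph as a finite union of sets cut out by polynomial (in)equalities, so $\zeta_l$ is semi-algebraic. Summing over $k$ and $l$ and precomposing with the linear maps $\bm{\phi}\mapsto\bm{p}_k$ then gives the semi-algebraicity of $h_2$, completing the argument for $H$.
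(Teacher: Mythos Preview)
Your argument is correct and follows essentially the same route as the paper: closure properties handle $F$, $G$, and $h_1$, and for $h_2$ one reduces via \eqref{h2represent_p} to showing each $\zeta_l$ is semi-algebraic by squaring the defining relation to obtain a polynomial equation. Your treatment is in fact more careful than the paper's, which simply asserts the graph equals $\{(\bm{p}_k,t):t^2|\bm{p}_k|^{2l}=\eta_{l+2}^2(\bm{p}_k)\}$ without your sign constraint $y\,\eta_{l+2}(\bm{x})\geqslant 0$ or the separate handling of the origin---both of which are needed for the set equality to be literally true.
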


\begin{proof}
	Items 1 and 2 in \cref{example} implies $ F $, $ G $ and $ h_1 $ are semi-algebraic. The remaining problem is to prove that $ h_2$ is also semi-algebraic. 
    By items 3 and 4 in \cref{example} and 
 \eqref{h2represent_p}, we only need to show $\zeta_{_l}$
	is semi-algebraic. This is easily seen from the definition since the graph of $ \zeta_{_l}(\bm{p}_k) $ can be written as
	\[
	\bigg\{
	(\bm{p}_k,t)\in\mathbb{R}^d\times\mathbb{R}:t=\frac{\eta_{_{l+2}}(\bm{p}_k)}{|\bm{p}_k|^{l}}
	\bigg\}
	=
	\bigg\{
	(\bm{p}_k,t)\in\mathbb{R}^d\times\mathbb{R}:t^2|\bm{p}_k|^{2l}-\eta_{_{l+2}}^2(\bm{p}_k)=0
	\bigg\}.
	\]
\end{proof}

Now we can summarize our main results in the following theorem.
\begin{theorem}[global convergence for the ESC problem]\label{main_theorem}
	Let the step size $\tau$ be smaller than the threshold $\bar{\tau}$ in \eqref{tau_threshold}, where $L_F$ and $L_H$ are determined in \cref{assumption} for given $\gamma$. If $\gamma^2$ is a polynomial, and the parameters satisfy $ a\geqslant\max_l \|\nabla^2_{\bm{x}}\zeta_{_{l}}\|_2 $ with $\zeta_{_{l}}$'s defined in \eqref{h2represent_p} and $ b\geqslant L_f $, then the sequence generated by the DYS algorithm associated with the splitting \eqref{splitting form} converges to a critical point of \eqref{discrete problem}.
\end{theorem}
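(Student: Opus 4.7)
The plan is to show that Theorem~\ref{main_theorem} follows as a direct corollary of Theorem~\ref{lemma_globalconvergence} (global convergence of DYS) combined with Theorem~\ref{lemma_boundness} (boundedness), once I have verified that the splitting \eqref{splitting form} satisfies the four conditions \eqref{condition1}--\eqref{condition4} enumerated after Theorem~\ref{lemma_boundness}. Since this enumeration was set up precisely for this purpose, the argument reduces to a bookkeeping exercise.

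First, I would verify conditions \eqref{condition1}--\eqref{condition4} under the stated parameter hypotheses. Conditions \eqref{condition1} and \eqref{condition2}, already checked in the preceding discussion, are immediate from the quadratic form of $F$ and the geometry of $C$. Condition \eqref{condition3} is exactly the content of Lemma~\ref{h_smoothness}, which requires $a\geqslant\max_l\|\nabla^2_{\bm{x}}\zeta_{_{l}}\|_2$ and $b\geqslant L_f$. Condition \eqref{condition4} is the semi-algebraic lemma just proved, whose only hypothesis is that $\gamma^2$ be a polynomial. Thus the parameter hypotheses of Theorem~\ref{main_theorem} are precisely what is needed to make conditions \eqref{condition1}--\eqref{condition4} hold, and in particular Assumption~\ref{assumption} is satisfied.

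Second, I would invoke Theorem~\ref{lemma_boundness}: with $F$ bounded below and coercive, $G$ the indicator of the bounded convex set $C$, and $H$ concave (the parameter conditions on $a,b$ give concavity via Lemma~\ref{h_smoothness}), the full sequence $\{(\bm{x}^{(n)},\bm{y}^{(n)},\bm{z}^{(n)})\}$ is bounded. The Bolzano--Weierstrass theorem then supplies at least one cluster point $(\bm{x}^*,\bm{y}^*,\bm{z}^*)$, which is the missing hypothesis in Theorem~\ref{lemma_globalconvergence}.

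Third, I would apply Theorem~\ref{lemma_globalconvergence} with $\tau<\bar\tau$ chosen so that $\mathcal{D}(\tau)>0$. Item (i) of that theorem gives $0\in\nabla F(\bm{z}^*)+\partial G(\bm{z}^*)+\nabla H(\bm{z}^*)$, which is exactly the first-order optimality condition for \eqref{discrete problem}. Item (iii) gives $\sum_n\|\bm{z}^{(n+1)}-\bm{z}^{(n)}\|<\infty$, so $\{\bm{z}^{(n)}\}$ is Cauchy and therefore converges to a limit; since this limit must coincide with the cluster point $\bm{z}^*$, the entire sequence converges to a critical point, which is the desired conclusion. The main obstacle is not in this theorem per se but in the supporting machinery already in place, namely the Lipschitz/concavity analysis of $H$ in Lemma~\ref{h_smoothness} (which rests on the delicate regularity of $\zeta_{_l}$ at the origin in Lemma~\ref{lemma_rearrange}) and the boundedness argument in Theorem~\ref{lemma_boundness}; given those, the present theorem is a clean assembly.
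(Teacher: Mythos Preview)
Your proposal is correct and follows exactly the paper's own route: the paper verifies conditions \eqref{condition1}--\eqref{condition4} in the text preceding Theorem~\ref{main_theorem} (via the quadratic form of $F$, the geometry of $C$, Lemma~\ref{h_smoothness}, and the semi-algebraic lemma), then states Theorem~\ref{main_theorem} as a summary, implicitly invoking Theorem~\ref{lemma_boundness} for the cluster point and Theorem~\ref{lemma_globalconvergence} for the convergence. Your added observation that item~(iii) of Theorem~\ref{lemma_globalconvergence} makes $\{\bm{z}^{(n)}\}$ Cauchy (hence convergent to $\bm{z}^*$, not merely subsequentially) is a useful clarification that the paper leaves implicit.
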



\section{Numerical simulations}\label{section4}
In this section, we apply the DYS algorithm to compute the ESC for different anisotropic surface energy densities.
If not explicitly specified, the computational domain is chosen as $\Omega=(0,1)^d$, where $d=2,3$. The mesh size is set to be $h=1/256$. Additionally, we initialize the step size as $\tau_0=1$, and choose the splitting parameters $a=10$ and $b=2$. The parameters for updating the step size are selected as $c_0=1$ and $c_1=10$. The initial condition is represented by a circle centered at $(0.5,0.5)$, as depicted in Figure \ref{strongly_epsilon}(a). Specifically, it is defined by 
\begin{equation}\label{initial_circle}
	\phi(x,y,t=0)=-\tanh\left(\frac{\sqrt{
 (x-0.5)^2+(y-0.5)^2}-0.3}{2\sqrt{2}\times10^{-2}}\right).
\end{equation}
This function is discretized as in Section 2 to produce an initial guess for $\bm{y}^{(0)}$.
As the output of Algorithm \ref{alg:algorithm3}, $\bm{z}^{(n+1)}$ approximates the ESC after reshaping  into a matrix representation. 
The iterations are terminated when the condition $\|\bm{y}^{(n+1)}-\bm{z}^{(n+1)}\|/\tau<10^{-8}$ is satisfied during the course of our simulations.

\subsection{Strongly anisotropic cases for four-fold anisotropy}
The equilibrium shapes exhibit pyramidal structures with sharp corners when the anisotropy strength $\alpha>\frac{1}{15}$ for four-fold anisotropy \eqref{four-fold}, which presents numerical challenges in simulations. We will show that this difficulty can be well circumvented using the proposed method in our comprehensive numerical investigation.

\subsubsection{Convergence for diminishing $\varepsilon$} 
We consider $\varepsilon=0.08, 0.04, 0.02, 0.01$, and fix the anisotropy strength $ \alpha=0.2 $.
The corresponding zero level-sets of $\bm{z}^{(n+1)}$ representing the ESC are depicted in Figure \ref{strongly_epsilon}(b). 
Apparently, sharp corners are well captured in the numerical results and the four ``facets'' are clearly observed for this four-fold symmetry, consistent with the theoretical solution obtained via the Wulff construction \cite{wulff1901}. A zoom-in plot reveals the monotone convergence of the numerical solution towards the exact one as $\varepsilon$ decreases.
\begin{figure}[htpb]
	\centering
	\subfloat{\includegraphics[trim=0.8cm 0cm 1.5cm 0.5cm, clip,width=0.42\linewidth]{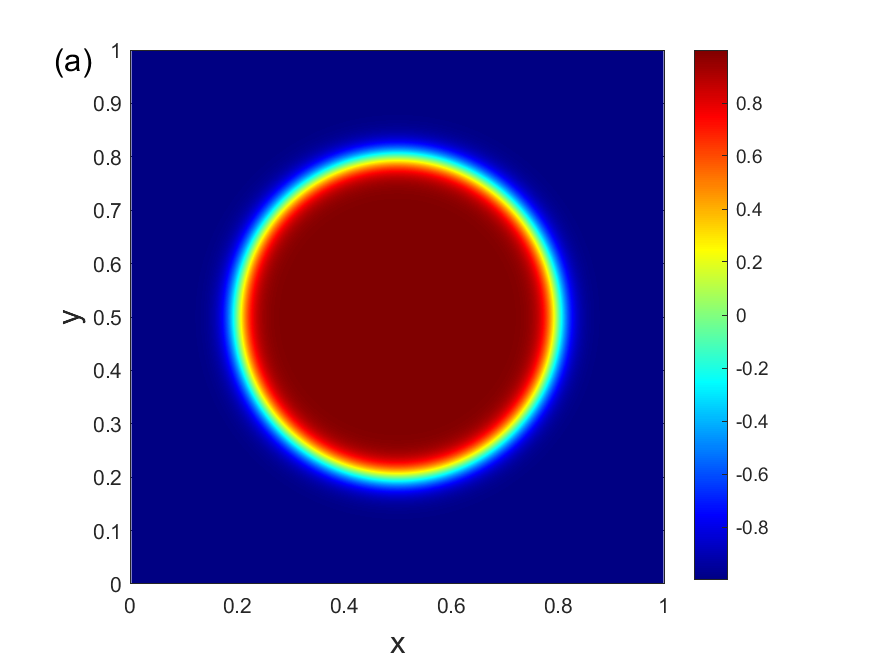}}
	\subfloat{\includegraphics[trim=1.5cm 0cm 1cm 0.5cm, clip,width=0.42\linewidth]{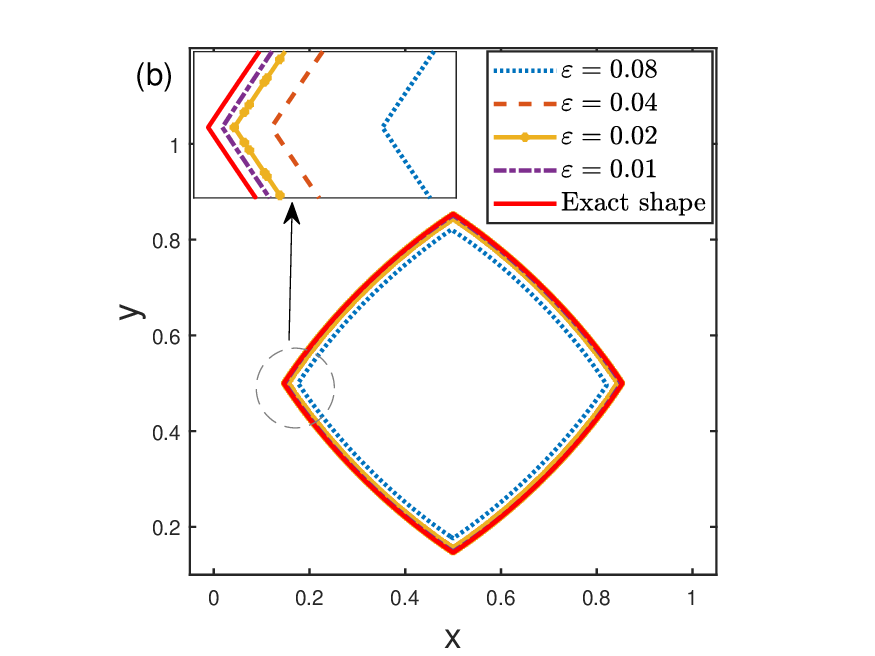}}	
	\caption{Numerical convergence for diminishing $\varepsilon$. (a) shows that the initial shape is a circle. In (b), the zero level-sets for different thickness parameters ranging from $ \varepsilon=0.08$ (blue dotted line) to $\varepsilon=0.01 $ (purple dashed-dot line) are plotted in the strongly anisotropic case with $\alpha=0.2$. These numerical results are compared with the theoretical solution obtained via the Wulff construction (red solid line), where the convergence trend is clearly observed, and sharp corners are well captured.\label{strongly_epsilon}}
\end{figure}

To quantify the convergence rate in $\varepsilon$, we employ the manifold distance metric introduced in \cite{zhao2021energy} as an error measure. Let $\Omega_1$ and $\Omega_2$ be the regions enclosed by the simple closed curves $\Gamma_1$ and $\Gamma_2$, respectively. The manifold distance $M(\Gamma_1, \Gamma_2)$ is defined as the area of the symmetric difference between $\Omega_1$ and $\Omega_2$:
\begin{equation*}
	M(\Gamma_1, \Gamma_2) = \left|(\Omega_1 \backslash \Omega_2) \cup (\Omega_2 \backslash \Omega_1)\right| = \left|\Omega_1\right| + \left|\Omega_2\right| - 2\left|\Omega_1 \cap \Omega_2\right|,
\end{equation*}
where $|\Omega|$ denotes the area of $\Omega$. In Table \ref{manifold_distance}, the manifold distance is evaluated as $\varepsilon$ gradually decreases from $0.08$ to $0.01$. It is observed that the convergence rate in $\varepsilon$ is first-order. 
\begin{table}[htpb]
    \centering
    \begin{tabular}{c|c|c}
			\hline
			$\varepsilon$&Distance&Order\cr\hline
			0.08&$4.57\times10^{-2}$&/\cr
			0.04&$1.51\times10^{-2}$&1.59\cr
			0.02&$4.82\times10^{-3}$&1.64\cr
			0.01&$1.79\times10^{-3}$&1.43\cr
			\hline
		\end{tabular}
    \caption{Manifold distance between the equilibrium shapes obtained by the proposed method and the exact shape under different thickness parameters $\varepsilon$.}
    \label{manifold_distance}
\end{table}

\subsubsection{Comparisons with $H^{-1}$ gradient flow}
We make a systematic comparison of our proposed method with the $H^{-1}$ gradient flow approach in the simulation of the ESC. 

For the gradient flow approach, we solve the anisotropic Cahn-Hilliard equation \eqref{anisotropic_cahn} with a bi-harmonic regularization by modifying the chemical potential as $\mu=f'(\phi)-\varepsilon^2 \nabla \cdot\bm{m}+\beta\varepsilon^2\Delta^2\phi$,
where $\beta$ controls the regularization strength. This regularization is necessary for tackling the ill-posedness of the gradient flow dynamics of strongly anisotropic surface energy \cite{wise2007solving}. The regularized anisotropic Cahn-Hilliard equation is numerically solved using the convex splitting method \cite{cheng2020weakly}. The parameter are fixed at $\varepsilon=0.02$ and $\beta=10^{-4}$. The time step size is $10^{-3}$, and the termination criterion is set to be $\|\phi^{(n+1)}-\phi^{(n)}\|\leqslant10^{-8}$. 

We first show a visual comparison of the ESC computed by the proposed method and the gradient flow approach under four different strengths of anisotropy $\alpha=0.1\sim0.4$. 
As shown in Figure \ref{strength}, the four-fold pyramidal shapes are observed for both methods and the computed ESCs look very similar.
However, a closer look at their corners suggests significant differences. 
For the proposed method, sharp corners always appear for all strong anisotropy strengths $\alpha$, and the corners become sharper and more pronounced as $\alpha$ increases. 
In contrast, in the numerical results obtained by the gradient flow approach, the corners are smooth and round-off which does not align with the theoretical prediction.
\begin{figure}[htpb]
	\centering
	\subfloat{\includegraphics[trim=1.5cm 0cm 1cm 0.5cm, clip,width=0.42\linewidth]{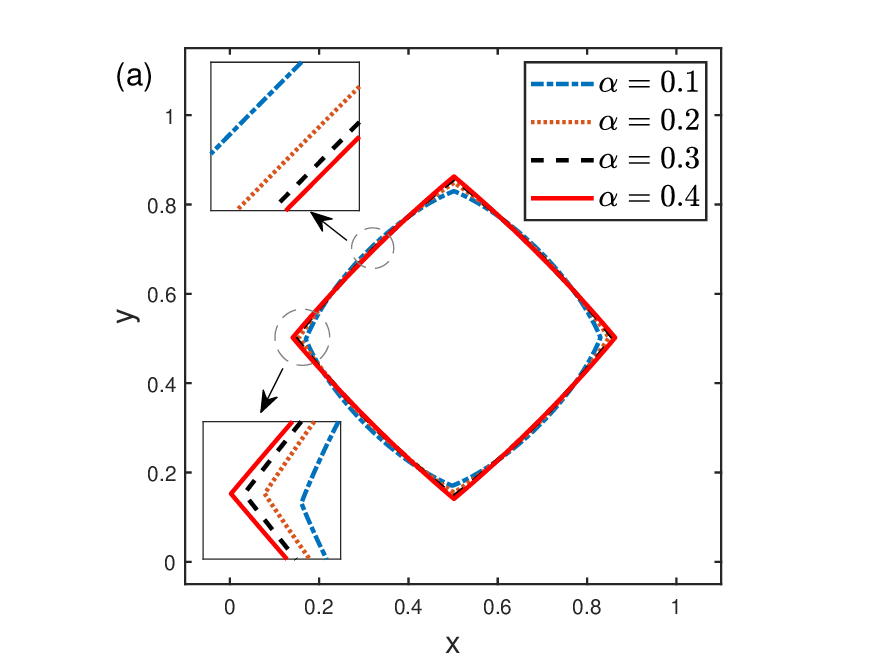}}	
	\quad
	\subfloat{\includegraphics[trim=1.5cm 0cm 1cm 0.5cm, clip,width=0.42\linewidth]{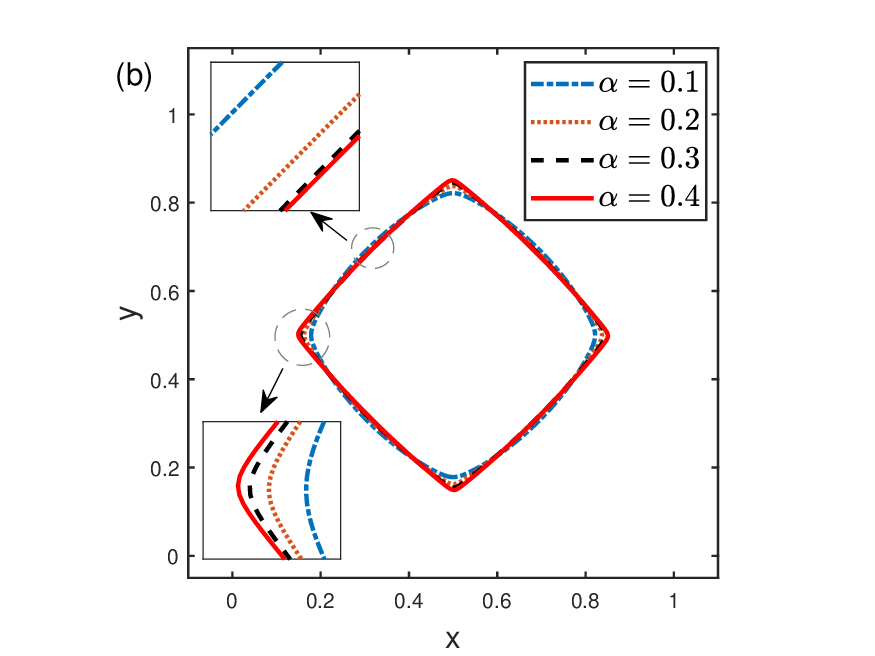}}	
	\caption{\label{strength} Comparison of the equilibrium zero level-sets obtained by the proposed method (shown in (a)) and the $H^{-1}$ gradient flow approach (shown in (b)) for four different anisotropy strengths $\alpha$. The zoom-in plots indicate that the corners of the pyramids are always sharp for the proposed method, while they are round-off for the gradient flow approach.}
\end{figure}

To give a more quantitative investigation, we focus on the case with $\alpha=0.2$. 
We show the ESCs of the proposed method, the gradient flow approach, and the theoretic solution in Figure \ref{compare}(a). 
It is observed that both methods provide promising numerical approximations to the theoretical solution and the computed contours almost agree with each other except at the corners. 
The different performance of the two methods can be further analyzed by computing the orientations of the contours. We represent the orientation of a contour curve by the angle between its tangent vector and the y-axis. Then the behavior of a contour curve is clearly revealed by the orientation plot versus the arc length (starting at the left corner point), as depicted in Figure \ref{compare}(b). 
It is evident that the numerical solution obtained by the proposed method agrees well with the exact one in terms of orientation, and there is a significant change in orientation near the left corner, indicating the presence of a sharp corner. 
In contrast, the gradient flow approach gives relatively smooth results with a gradually varying orientation near the left corner. Although a diminishing $\beta$ can enhance the transition in the orientation and improve the approximation to the exact solution, there is still a much larger error nearby the sharp corner in comparison with our proposed method.

\begin{figure}[htpb]
	\centering
	\subfloat{ \includegraphics[trim=1.5cm 0cm 1cm 0.5cm, clip,width=0.40\linewidth]{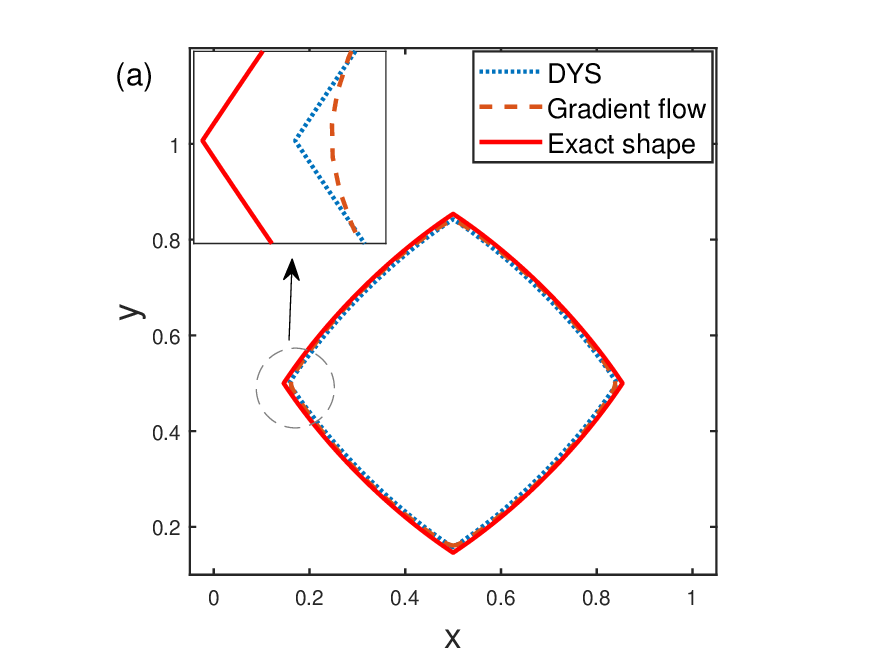}}	
	\quad
	\subfloat{\includegraphics[trim=1cm 0.0cm 1.5cm 0.5cm, clip,width=0.55\linewidth]{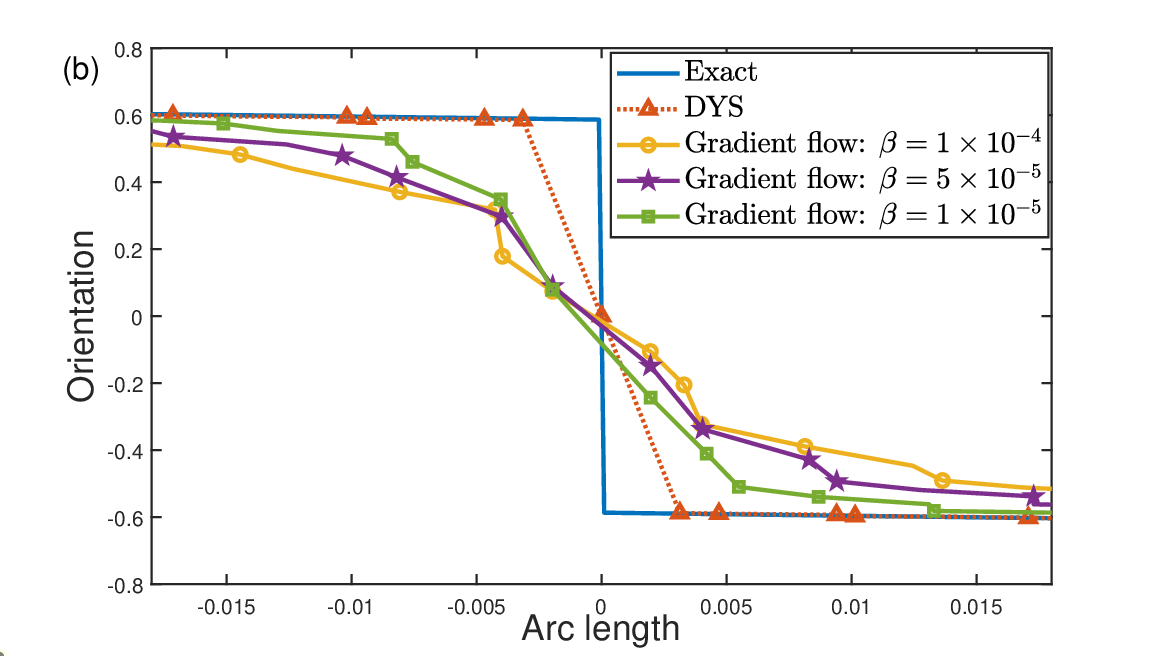}}	
	\caption{\label{compare}Comparison of the ESCs obtained by the proposed method, the $H^{-1}$ gradient flow approach, and the theoretic result. The contour plots are given in (a), while the orientation plots near the left corners of the ESCs are shown in (b). The gradient flow results with different regularization strengths $\beta$ are provided to show the convergence tendency as $\beta\rightarrow0$.}
\end{figure}


As an important indicator for optimization problems, the extent to which the optimality condition is violated is usually checked. We compute the $L_2$ norm of the residual of the optimality condition~\cite{andreani2008augmented}
\[
\|\operatorname{P}_{Box[\mathbf{-1,1}]}(\bm{z}^n-\nabla F(\bm{z}^n)-\nabla H(\bm{z}^n)-\lambda \mathbf{1})-\bm{z}^n\|_2,
\]
with $\lambda$ being a constant minimizing this norm,
and plot it versus the CPU time cost in Figure~\ref{cputime}(a) for the above two approaches. It is remarkable that the residual of the optimality condition is seriously reduced with time for the proposed method, leading to a much milder violation of the optimality condition than the gradient flow approach does. 
The residual of the optimality condition for the gradient flow approach is confined at the order of $\mathcal{O}(10^{-4})$, which may be attributed to the violation of the box constraint. 


Finally, we also test the original energy decay of these two methods. We set the initial step size $\tau_0=0.1$ for the proposed method, and depict the evolution of the original energy function with the CPU time in Figure~\ref{cputime}(b). 
Both methods exhibit energy decaying behavior. It is noteworthy that the ESC computed by the proposed method enjoys a lower original energy than that of the gradient flow approach.
\begin{figure}[htpb]
	\centering
	\subfloat{\includegraphics[trim=0.2cm 0cm 0.2cm 0.5cm, clip,width=0.42\linewidth]{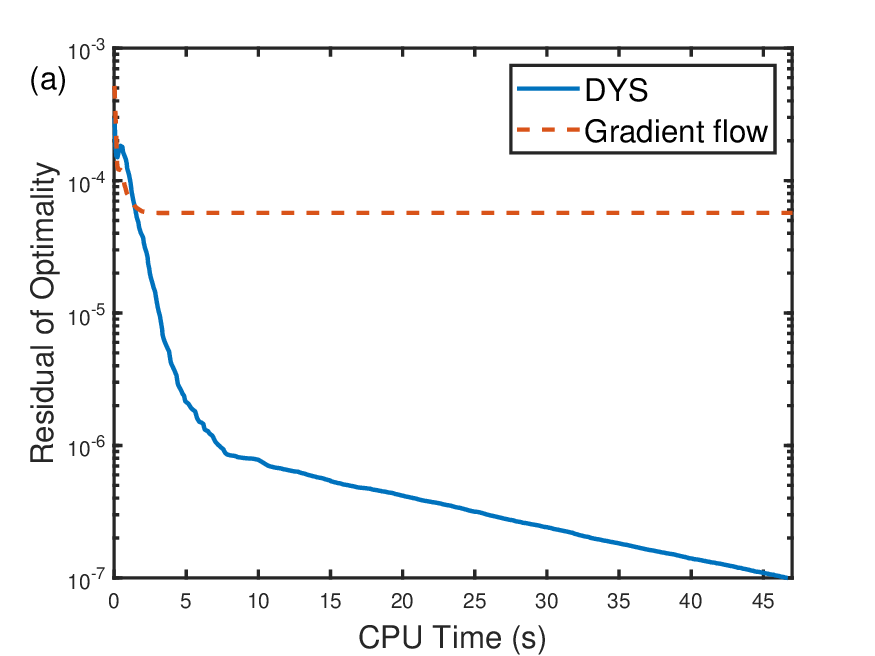}}
 \quad
	\subfloat{\includegraphics[trim=0.2cm 0cm 0.2cm 0.5cm, clip,width=0.42\linewidth]{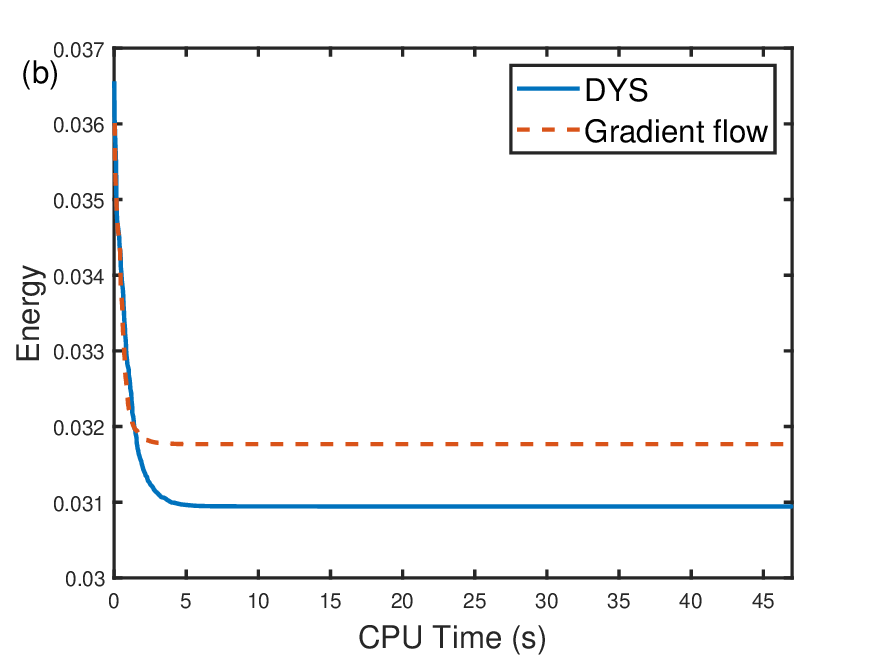}}	
	\caption{The residual of the optimality condition (shown in (a)) and the original energy (shown in (b)) versus the CPU time. The proposed method achieves a smaller residual of the optimality condition at about $10^{-7}$ while the gradient flow approach remains confined to a scale around $10^{-4}$. Moreover, the proposed method favors lower original energy.}
        \label{cputime}
\end{figure}





\subsubsection{Mass-conservation and bound-preservation}

As an encouraging feature of the proposed method, the mass-conservation and the box constraint can be naturally guaranteed during the iteration due to the projection in the three-operator-splitting.
As shown in Figure \ref{fig_bound_mass}(a), the bounds of $ \bm{z}^{(n+1)} $ are well preserved within the range $ [-1,1] $. Moreover, we also compute the relative loss in the discrete total mass of $ \bm{z}^{(n+1)} $, namely $ \frac{\bm{1}^T\bm{z}^{(n+1)}}{\bm{1}^T\bm{z}^{(1)}} -1 $.
As shown in Figure \ref{fig_bound_mass}(b), the relative loss is very close to zero within the machine precision at the magnitude of $ \mathcal{O}(10^{-15}) $. 
These results numerically validate the mass-conservation and bound-preserving properties of the proposed method.
\begin{figure}[htpb]
	\centering
	\subfloat{\includegraphics[trim=0.3cm 0cm 0.5cm 0.0cm, clip,width=0.42\linewidth]{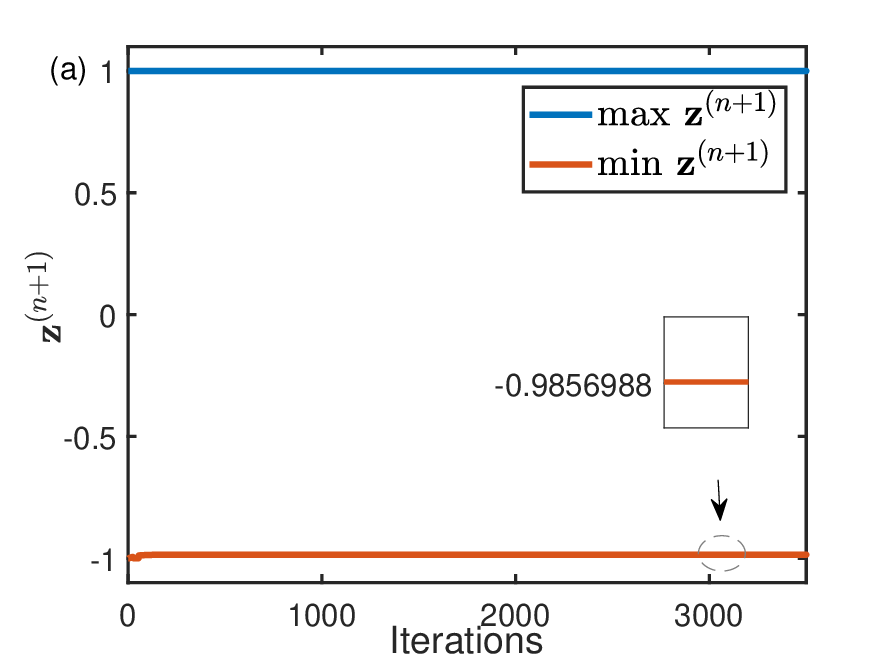}}	
	\quad
	\subfloat{\includegraphics[trim=0.3cm 0cm 0.5cm 0.0cm, clip,width=0.42\linewidth]{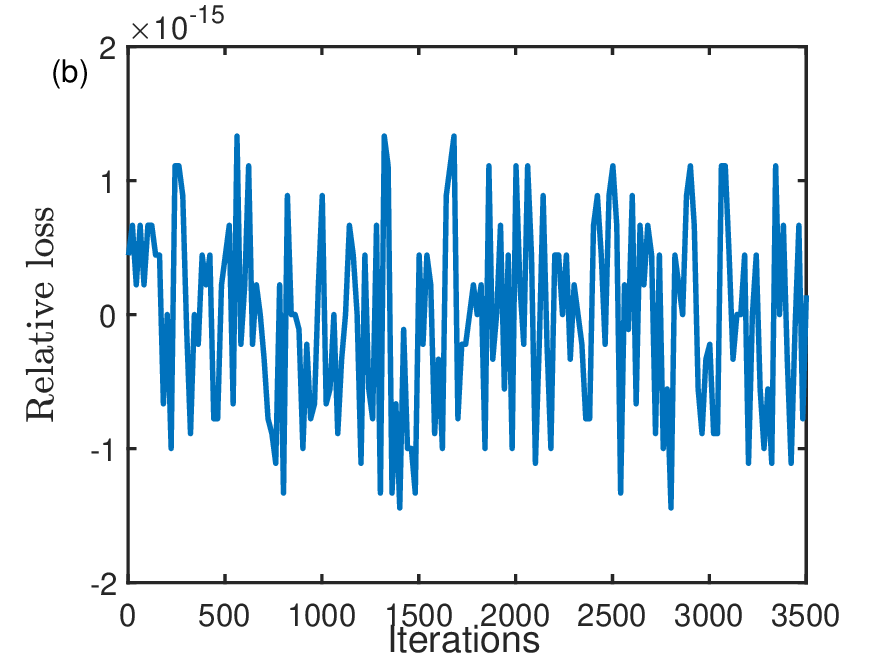}}	
	\caption{\label{fig_bound_mass}Mass-conservation and bound preservation. (a) shows the upper and lower bounds of $\bm{z}^{(n+1)}$. (b) shows the relative loss in the discrete mass of $\bm{z}^{(n+1)}$.}
\end{figure}

\subsubsection{Different initial profiles}
We investigate the effects of different initial profiles. We first consider an initial condition given by two circles with different sizes: 
\begin{align*}
	\phi(x,y,t=0)=&-\tanh\left(\frac{\sqrt{(x-0.4)^2+(y-0.6)^2}-0.25}{\varepsilon}\right)\\
	&-\tanh\left(\frac{\sqrt{(x-0.8)^2+(y-0.2)^2}-0.16}{\varepsilon}\right)+1.
\end{align*}
Figure \ref{fig_two_circle_energy} illustrates the energy decaying effect under the parameters $\alpha=0.2$ and $ \tau_0=0.1 $ during the iterations. It is evident that both the original energy and the modified energy \eqref{new_energy} exhibit two distinct rapid decreases. The first significant decreases occur immediately when the iteration begins, and the two circles evolve rapidly to two four-fold pyramids induced by the anisotropy. Then it follows that these two pyramids merge into one, during which the second rapid decrease in the energies takes place.
Notably, similar effects are also observed in a different simulation framework via $H^{-1}$ gradient flow approach \cite{shen2018stabilized,chen2019fast}.


\begin{figure}[htpb]
	\centering
	\includegraphics[trim=0cm 0.1cm 0.5cm 0.5cm, clip,width=0.42\linewidth]{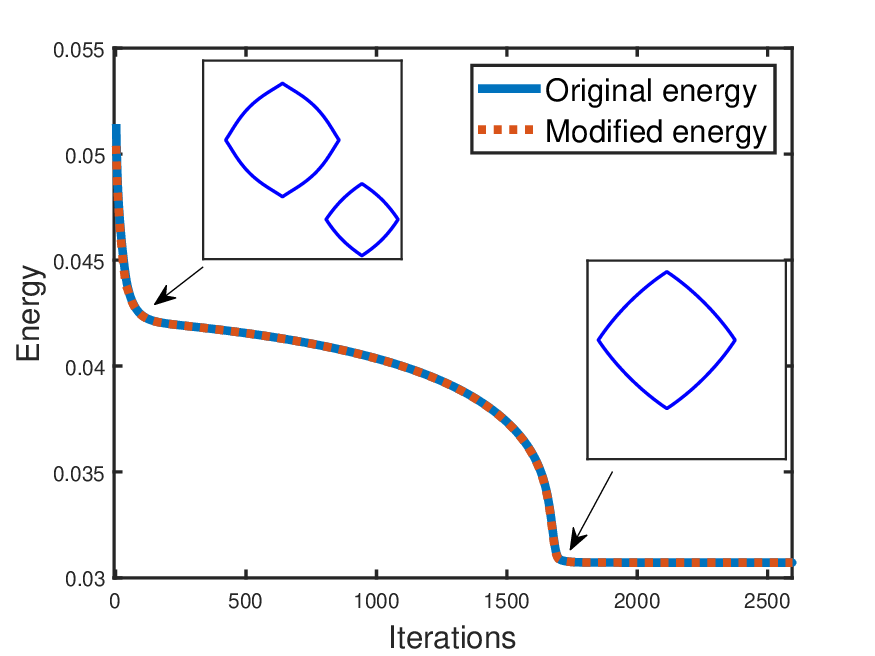}
	\caption{\label{fig_two_circle_energy}The evolution of energy functions when the initial profile is two-circle. Both the original and modified energies decay with iteration. The inset plots indicate two particular snapshots of crystal profiles around which the energies behave quite differently. 
 }
\end{figure}

As a second example, we consider the random initial concentration field given by
\[
\phi(x,y,t=0)=-0.5+0.001\operatorname{rand}(x,y),
\]
whose profile is displayed in Figure \ref{fig_rand}(a). We choose $\alpha=0.2$, $\varepsilon=0.02$, and the splitting parameter $a=100$. As depicted in Figure \ref{fig_rand}(b), a four-fold pyramid is observed as the ESC as a result of periodic boundary conditions, indicating the phase separation effect of the phase-field model.
\begin{figure}[htpb]
	\centering
	\subfloat{\includegraphics[trim=1cm 0cm 1cm 0.5cm, clip,width=0.40\linewidth]{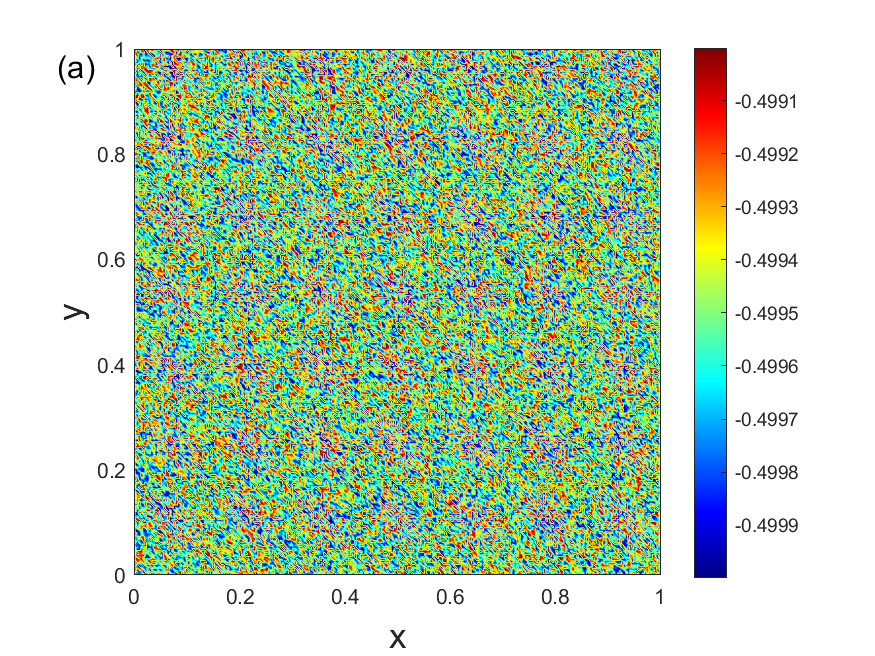}}
    \quad
	\subfloat{\includegraphics[trim=1cm 0cm 1cm 0.5cm, clip,width=0.40\linewidth]{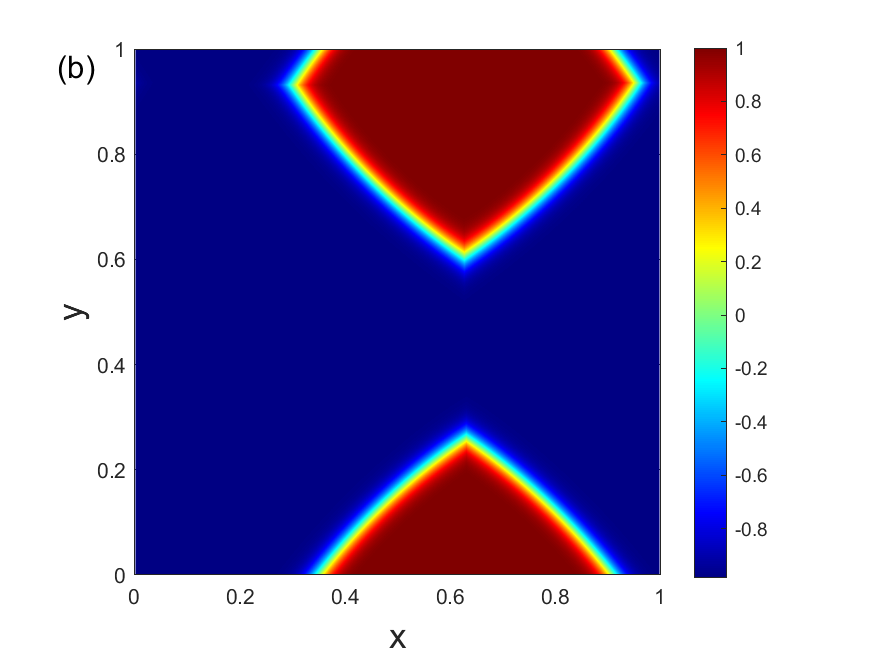}}	
	\caption{\label{fig_rand} The ESC (shown in (b)) for random initial profile (shown in (a)).}
\end{figure}

\subsection{Anisotropy with different symmetries}
In two-dimensional cases, the anisotropy function $\gamma(\bm{n})=\gamma(\theta)$  can be represented as a periodic function of $\theta$ with $\tan(\theta)=n_2/n_1$.  The commonly used $k$-fold smooth anisotropy function is
\[
\gamma(\theta)=1+\alpha\cos(k\theta), \quad\theta\in(-\pi,\pi],
\]
where  $\alpha$ controls the strength of anisotropy, and $k$ denotes the symmetry parameter. For instance, when $k=4$, it corresponds to the two-dimensional case of the four-fold anisotropy \eqref{four-fold} whose ESC is invariant under a $\frac{\pi}{2}$-rotation. In this subsection, we numerically study the ESCs of anisotropic energy functionals with different symmetries using our proposed method. Specifically, we consider $k=2,~3,~6$. 

When $\alpha>\frac{1}{k^2-1}$, the systems exhibit strong anisotropy \cite{wise2007solving}. In our numerical simulations, we set $\alpha=0.4$ and illustrate the corresponding ESCs for different $ k $ in Figure \ref{fig_k_fold}. Notably, the computed ESCs with correct numbers of ``facets" and sharp corners are obtained. 
\begin{figure}[htpb]
	\centering
	\subfloat[$ k=2 $]{\includegraphics[trim=1cm 0cm 1cm 0.75cm, clip,width=0.333\linewidth]{./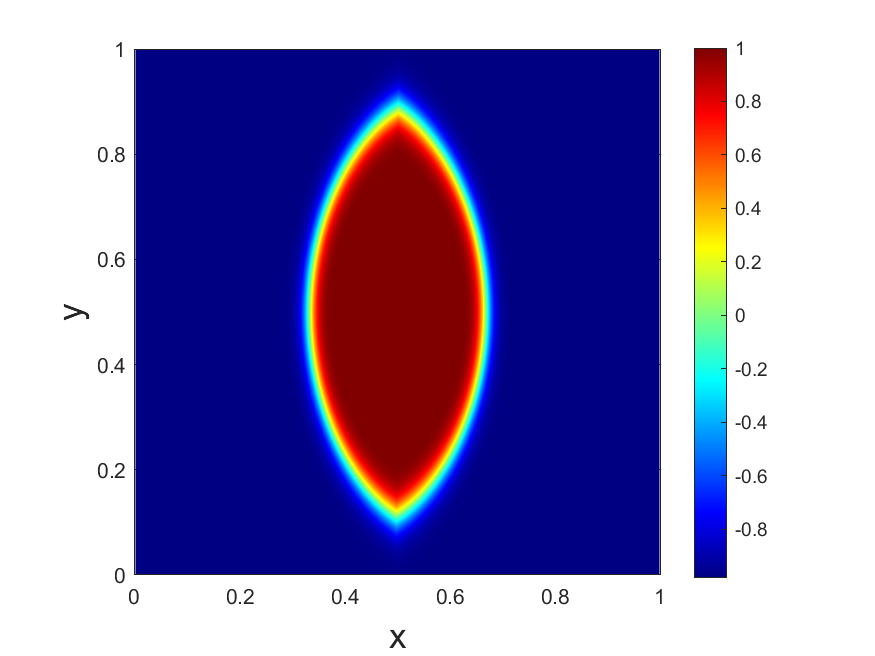}}
	\subfloat[$ k=3 $]{\includegraphics[trim=1cm 0cm 1cm 0.75cm, clip,width=0.333\linewidth]{./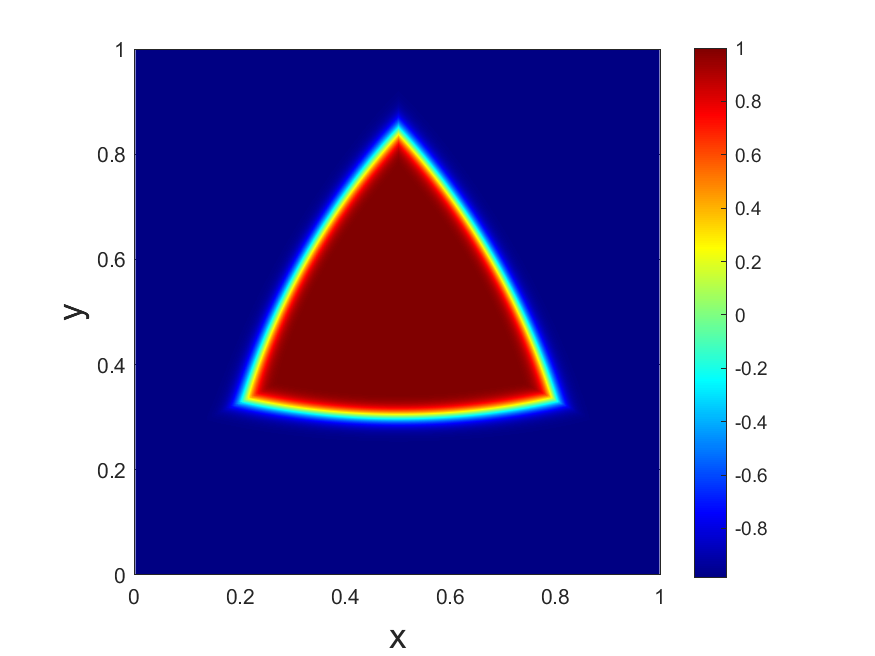}}	
	\subfloat[$ k=6 $]{\includegraphics[trim=1cm 0cm 1cm 0.75cm, clip,width=0.333\linewidth]{./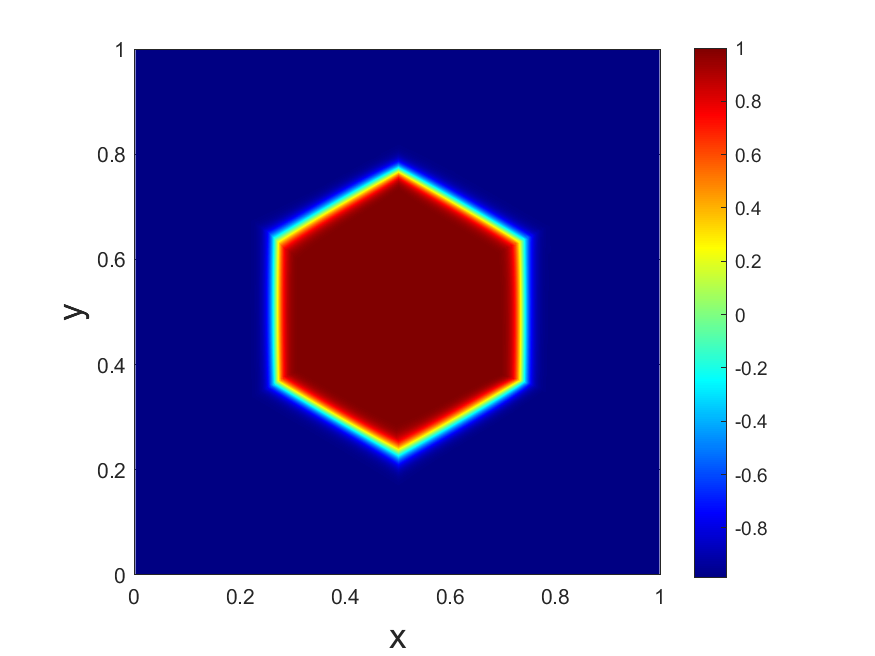}}
	\caption{\label{fig_k_fold}The ESCs of anisotropic energy functionals with different symmetry parameters $ k=2,~3,~6$.}
\end{figure}

\subsection{Anisotropy of Riemannian metric form}
Besides the $k$-fold type anisotropy, there is another commonly-used anisotropy, namely the anisotropy of Riemannian metric form, which is defined as follows \cite{deckelnick2005computation,jiang2019sharp}:
\begin{equation}\label{riemann_euq}
	\gamma(\bm{n})=\sum_{k=1}^{K}\sqrt{G_k\bm{n}\cdot\bm{n}},\quad G_k=R(-\psi_k)D(\delta_k)R(\psi_k),\quad k=1,2,\ldots,K,
\end{equation}
where the matrices $ D $ and $ R $ are given by
$$
D(\delta)=\left(\begin{array}{cc}
	1 & 0 \\
	0 & \delta^2
\end{array}\right), \quad R(\psi)=\left(\begin{array}{cc}
	\cos \psi & \sin \psi \\
	-\sin \psi & \cos \psi
\end{array}\right).
$$
We consider two sets of parameters:
\begin{enumerate}
	\item $ K=2 $, $ \psi_1=0 $, $ \psi_2=\frac{\pi}{2} $, and $\delta_k=\delta$ for $k=1,~2$;
	
	\item $ K=3 $, $ \psi_1=0 $, $ \psi_2=\frac{\pi}{3} $, $ \psi_3=\frac{2\pi}{3} $, and $\delta_k=\delta$ for $k=1,~2,~3$.
\end{enumerate}
For the first set of parameters, $\gamma(\bm{n})=\sqrt{n_1^2+\delta^2n_2^2}+\sqrt{\delta^2n_1^2+n_2^2}$. It can be viewed as a regularization for the non-smooth anisotropy $\gamma(\bm{n})=|n_1|+|n_2|$ whose corresponding ESC also has sharp corners (arising from the non-smoothness of $\gamma(\bm{n})$). 
The parameter $\delta$ plays a role of penalizing the sharp corners. When $ \delta $ decreases to zero, the ESC will exhibit ``sharper and sharper'' corners, although its contour is still smooth for non-zero $\delta$. 

By employing our proposed method, we conduct numerical investigations for a sequence of decreasing regularization parameters $\delta^2=0.01,~0.001,~0.0001$ with the corresponding splitting parameters $a=10,~50,~100$. As depicted in Figure \ref{riemann_1}, the ESCs of square and hexagon shapes are respectively observed for the two sets of parameters. Apparently, the corners become ``sharper'' as $\delta$ decreases. Interestingly, the facets of the ESCs are flatter than that of the smooth $k$-fold anisotropy, making them look more like standard squares and hexagons. 
These findings are in agreement with the observations reported in \cite{jiang2019sharp}.

\begin{figure}[htpb]
	\centering
	\subfloat{\includegraphics[trim=0.5cm 0cm 0.5cm 0.5cm, clip,width=0.42\linewidth]{./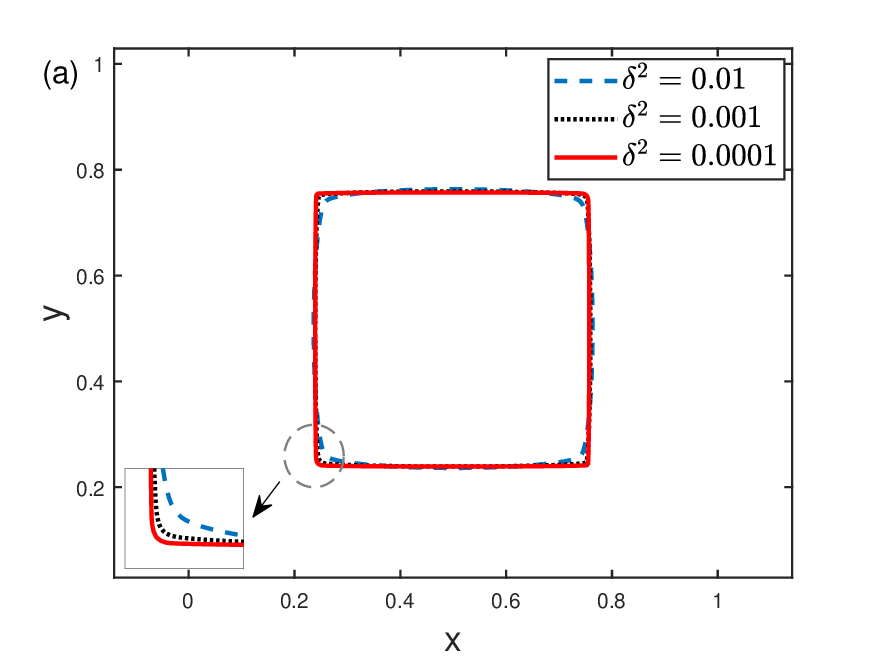}}
	\subfloat{\includegraphics[trim=0.5cm 0cm 0.5cm 0.5cm, clip,width=0.42\linewidth]{./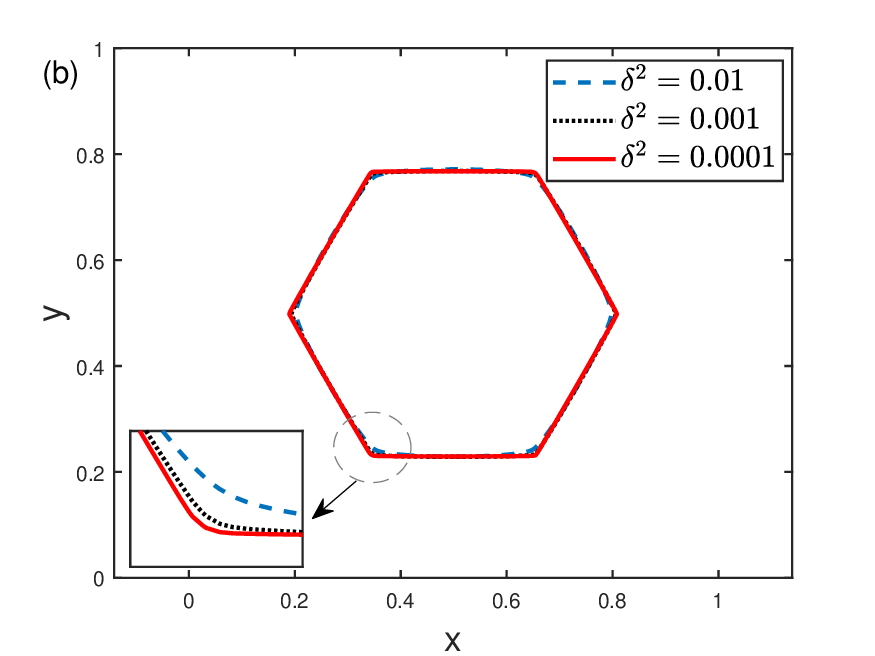}}	
	\caption{\label{riemann_1}The ESCs of anisotropic functionals of Riemannian metric form. The parameters used in simulations are: (i)	$ K=2 $, $ \psi_1=0 $,$ \psi_2=\frac{\pi}{2} $ (shown in (a)); (ii) $ K=3 $, $ \psi_1=0 $, $ \psi_2=\frac{\pi}{3} $, $ \psi_3=\frac{2\pi}{3} $ (shown in (b)). Zoom-in plots nearby the corners are also shown to demonstrate the regularization effect for decreasing $\delta$.}
\end{figure}

\subsection{Three-dimensional simulations}
In three-dimensional cases, we conduct numerical simulations for two types of anisotropy. 

For the first type, we consider an ellipsoidal anisotropy of the form $\gamma(\bm{n})=\sqrt{2n_1^2+n_2^2+n_3^2}$. The theoretical prediction for the ESC under this anisotropy is a self-similar ellipsoid given by $\frac{x^2}{2}+y^2+z^2=1$ \cite{zhao2020parametric}. In our simulations, we initialize with a ball-shaped concentration field:
\[
\phi(x,y,z,t=0)=-\tanh\left(\frac{\sqrt{(x-0.5)^2+(y-0.5)^2+(z-0.5)^2}-0.3}{\sqrt{2}\varepsilon}\right).
\]
The computed ESC indeed exhibits an ellipsoidal shape, as shown in Figure \ref{fig_ellipsoid}(a).

Finally, we study the four-fold anisotropy \eqref{four-fold} with $\alpha=0.2$ and $\varepsilon=0.02$ as the second type of anisotropy. The initial condition is also set to be the ball-shaped concentration field. The computed ESC under this anisotropy is a double-sided pyramid with sharp corners, as depicted in Figure \ref{fig_ellipsoid}(b).

\begin{figure}[htpb]
	\centering
	\subfloat{\includegraphics[width=0.42\linewidth]{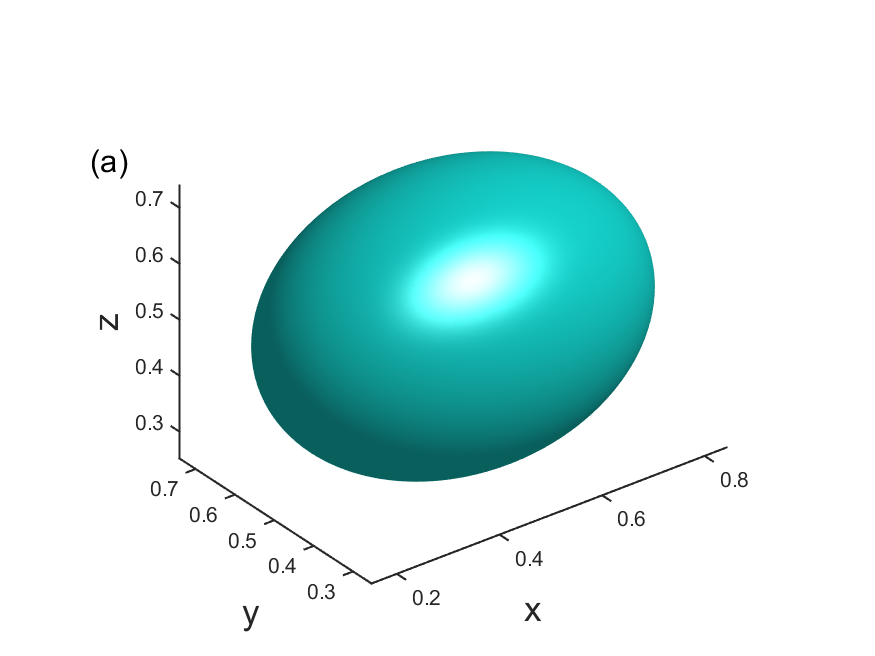}}
    \quad
	\subfloat{\includegraphics[width=0.42\linewidth]{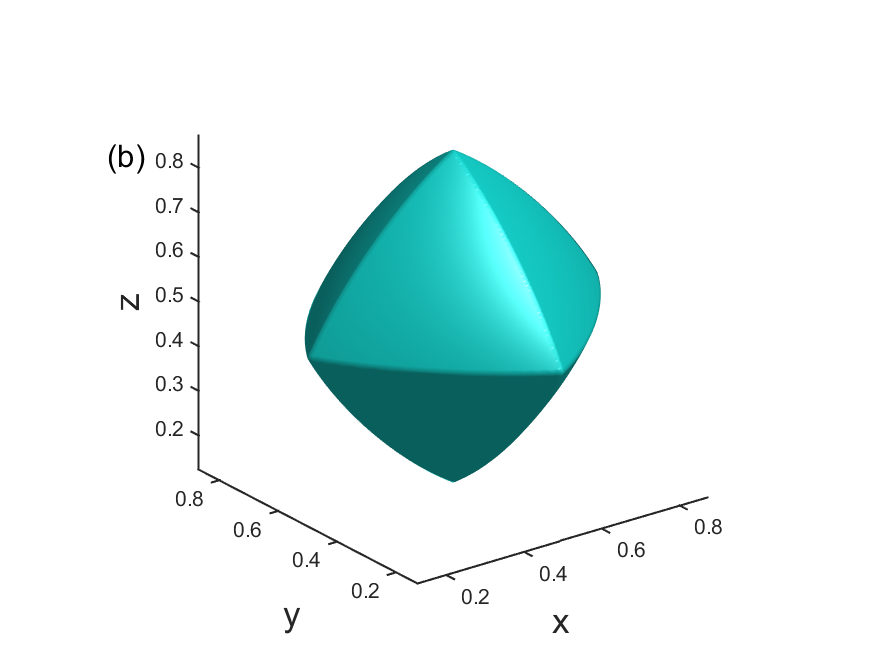}}	
	\caption{\label{fig_ellipsoid}The ESCs for ellipsoid anisotropy (shown in (a)) and four-fold anisotropy (shown in (b)).}
\end{figure}


\section{Conclusion and discussion}
In this paper, we proposed a novel numerical method based on the DYS algorithm for determining the ESCs. By splitting the discretized anisotropic surface energy function into a convex function, a concave function and an indicator function of a bounded convex set, we solved the energy minimization problem under the mass-conservation and the box constraint using the DYS algorithm.
The proposed method has the advantage of conserving the total mass and preserving the bound automatically at each iteration. This algorithm was efficiently implemented using fast solvers. More importantly, we proved that the proposed method has global convergence to some critical points. In comparison with the $H^{-1}$ gradient flow approach for minimizing anisotropic energy functionals with regularizations, the computed ESCs based on our proposed method exhibit sharp corners, which is consistent with the theoretical prediction and yields better accuracy. 

%


We employed the proposed method to successfully simulate the ESCs for $k$-fold anisotropic surface energies and Riemannian metric types of anisotropy. The desired properties, including mass-conservation, bound-preservation, energy decay and convergence to theoretical solutions, were numerically observed. In addition, the ESC computed by the proposed method shows a lower residual of the optimality condition and a lower original energy than that by the gradient flow approach does.

Despite the encouraging performance in simulating the ESC, the proposed method cannot be directly applied to predict evolution dynamics of crystals. This motivates us to investigate the connection between the evolution dynamics of the proposed method and the gradient flow dynamics. 
Furthermore, for more complex anisotropic energy functionals, the equilibrium states can be more complicated and more ESCs may exist. This is quite typical in solid-state dewetting problems where boundary energies are included. 
We will study the performance of operator-splitting optimization approaches in predicting the equilibrium states of solid-state dewetting problems~\cite{Jiang17}.


\appendix
\section{Semi-algebraic functions}\label{appe_semi}
Semi-algebraic function plays an important role in the convergence study of optimization algorithms. This appendix introduces its definition and provides some typical examples (c.f. \cite{bolte2014proximal}).

\begin{definition}[Semi-algebraic sets and functions]\label{def_semi}
	\quad
	\begin{enumerate}
		\item	 A subset $S$ of $\mathbb{R}^n$ is a real semi-algebraic set if there exists a finite number of real polynomial functions $f_{i j}, g_{i j}: \mathbb{R}^n \rightarrow \mathbb{R}$ such that
            \[
                S=\bigcup_{j=1}^p \bigcap_{i=1}^q\left\{x \in \mathbb{R}^n: f_{i j}(x)=0 \text { and } g_{i j}(x)<0\right\} .
            \]
		
		\item	A function $f: \mathbb{R}^n \rightarrow(-\infty,+\infty]$ is called semi-algebraic if its graph
		$$\left\{(x, t) \in \mathbb{R}^{n+1}: f(x)=t\right\}$$
		is a semi-algebraic subset of $\mathbb{R}^{n+1}$.
	\end{enumerate} 
\end{definition}

\begin{property}\label{example}
	The following functions are the semi-algebraic functions:
	\begin{enumerate}
		\item Real polynomial functions;
  
		\item Indicator functions of semi-algebraic sets;
		
		\item Finite sums and product of semi-algebraic functions;
		
		\item Composition of semi-algebraic functions.
	\end{enumerate}
\end{property}

\section{Proof of \cref{lemma_boundness}}\label{appe_lemma_bound}
	
	Because $F$ has a Lipschitz continuous gradient, for $\tilde{\bm{x}}=\bm{x}-\frac{1}{L_{_F}} \nabla F(\bm{x})$ we have 
	\begin{equation*}
		\begin{aligned}
            F(\tilde{\bm{x}})\leqslant F(\bm{x})+\left\langle\nabla F(\bm{x}),\tilde{\bm{x}}-\bm{x}\right\rangle+\frac{L_{_{F}}}{2}\left\|\tilde{\bm{x}}-\bm{x}\right\|^2 = F(\bm{x})-\frac{1}{2 L_{_{F}}}\|\nabla F(\bm{x})\|^2.
		\end{aligned}
	\end{equation*}
        This inequality together with the lower boundedness of $\nabla F$ implies there exists a constant $\zeta^*>-\infty$ such that
        \begin{equation}\label{f_bounded_below}
            F(\bm{x})-\frac{1}{2 L_{_{F}}}\|\nabla F(\bm{x})\|^2\geqslant \zeta^*.
        \end{equation}
	From \eqref{step3_alg1} and the first-order optimality condition \eqref{first_optimal}, we have
	\begin{equation}\label{norm_first_order}
		\|\bm{x}^{(n)}-\bm{z}^{(n)}\|^2=\|x^{(n-1)}-\bm{y}^{(n)}\|^2=\tau^2\|\nabla F(\bm{y}^{(n)})\|^2.
	\end{equation}
	Moreover, since $ H $ is a concave function, 
	\begin{equation}\label{h_concave_tangent}
		H(\bm{z}^{(n)})\leqslant H(\bm{y}^{(n)})+\left\langle\nabla H(\bm{y}^{(n)}),\bm{z}^{(n)}-\bm{y}^{(n)}\right\rangle.
	\end{equation}
	Since $ G $ is an indicator function of a bounded set, by  \eqref{step2_alg1}, we know $ \bm{z}^{(n)} $ is bounded and $G(\bm{z}^{(n)})=0.$ 
 Combining  \eqref{f_bounded_below}, \eqref{norm_first_order}, \eqref{h_concave_tangent} and  \eqref{descent_lemma}, we obtain 
	\[
	\begin{aligned}
		&\Theta_\tau\left(\bm{x}^{(1)}, \bm{y}^{(1)}, \bm{z}^{(1)}\right)\geqslant\Theta_\tau\left(\bm{x}^{(n)}, \bm{y}^{(n)}, \bm{z}^{(n)}\right)\\
		=&F(\bm{y}^{(n)})+H(\bm{y}^{(n)})+\frac{1}{\tau}\left\langle \bm{y}^{(n)}-\bm{z}^{(n)},\bm{y}^{(n)}-\bm{x}^{(n)}-\tau\nabla H(\bm{y}^{(n)})\right\rangle\\
		&-\frac{1}{2\tau}\|\bm{y}^{(n)}-\bm{z}^{(n)}\|^2	\\
		=&F(\bm{y}^{(n)})+H(\bm{y}^{(n)})+\frac{1}{2\tau}\|\bm{x}^{(n)}-\bm{y}^{(n)}\|^2-\frac{1}{2\tau}\|\bm{x}^{(n)}-\bm{z}^{(n)}\|^2\\
		&+\left\langle\nabla H(\bm{y}^{(n)}),\bm{z}^{(n)}-\bm{y}^{(n)}\right\rangle\\
		=&(1-\tau L_{_{F}})F(\bm{y}^{(n)})+\left(H(\bm{y}^{(n)})+\left\langle\nabla H(\bm{y}^{(n)}),\bm{z}^{(n)}-\bm{y}^{(n)}\right\rangle\right)+\frac{1}{2\tau}\|\bm{x}^{(n)}-\bm{y}^{(n)}\|^2\\
		&+\tau L_{_{F}}\left(F(\bm{y}^{(n)})-\frac{1}{2 L_{_{F}}}\|\nabla F(\bm{y}^{(n)})\|^2\right)\\
		\geqslant&(1-\tau L_{_{F}})F(\bm{y}^{(n)})+H(\bm{z}^{(n)})+\frac{1}{2\tau}\|\bm{x}^{(n)}-\bm{y}^{(n)}\|^2+\tau L_{_{F}}\zeta^*,
	\end{aligned}
	\]
	where the first equality is a direct rearrangement of $\Theta_\tau$ and the second equality is a result of the identity $2cd-d^2=c^2-(c-d)^2$. 
        Due to the threshold \eqref{tau_threshold}, we have  $ 1-\tau L_{_{F}}>0 $. Since $ \bm{z}^{(n)} $ is bounded and $H$ is continuous, it follows that $ H(\bm{z}^{(n)}) $ is bounded. As a result, both $ F(\bm{y}^{(n)}) $ and $ \bm{x}^{(n)}-\bm{y}^{(n)} $ are bounded, and the coercivity of $ F $ implies the boundedness of $ \bm{y}^{(n)} $. Thus, the sequence $\left\{\left(\bm{x}^{(n)}, \bm{y}^{(n)}, \bm{z}^{(n)}\right)\right\}_{n \geqslant 1}$ generated by \cref{alg:algorithm1} is bounded.

\section{Proof of \cref{lemma_gradient_H}}\label{appe_gradient_h}
We split $H$ into two parts as shown in \eqref{eq_h_1}.
By the chain rule and the definition of $ \bm{p} $ and $ \bm{p}_k $ in \eqref{def_p}, 
\begin{equation*}\label{chain_gradient}
	\nabla h_2(\bm{\phi})=\left[\begin{matrix}
		D_x^T&D_y^T
	\end{matrix}\right]\sum_{k=1}^{m^2}\left[\begin{matrix}
		\bm{e_k} &  \\& \bm{e_k} 
	\end{matrix}\right]\nabla_{\bm{p}_k} h_2.
\end{equation*}
Moreover, by the definition of $\bm{n}_k$ in \eqref{def_nk}
\begin{equation*}\label{chain_h2}
	\nabla_{\bm{p}_k} h_2=\varepsilon^2\left(
	(\gamma^2(\bm{n}_k)-a)\bm{p}_k+\gamma(\bm{n}_k)|\bm{p}_k|(I_2-\bm{n}_k\otimes\bm{n}_k)\nabla_{\bm{n}}\gamma(\bm{n}_k)
	\right)=\varepsilon^2\left[\begin{matrix}
		\mathcal{A}_1(\bm{p}_k)\\
		\mathcal{A}_2(\bm{p}_k)
	\end{matrix}\right].
\end{equation*}
With these two equalities, we arrive at the conclusion after a direct calculation:
\begin{equation}\label{grad_h2}
	\begin{aligned}
		\nabla H(\bm{\phi})
		&=\left[
		\begin{matrix}
			f'(\phi_1)-b\phi_1 \\f'(\phi_2)-b\phi_2 \\ \cdots\\f'(\phi_{m^2}) -b\phi_{m^2}
		\end{matrix}
		\right]+\varepsilon^2\left[\begin{matrix}
			D_x^T&D_y^T
		\end{matrix}\right]\sum_{k=1}^{m^2}\left[\begin{matrix}
			\bm{e_k} &  \\& \bm{e_k} 
		\end{matrix}\right]
		\left[\begin{matrix}
			\mathcal{A}_1(\bm{p}_k)\\
			\mathcal{A}_2(\bm{p}_k)
		\end{matrix}\right]\\
		&=\left[
		\begin{matrix}
			f'(\phi_1)-b\phi_1 \\f'(\phi_2)-b\phi_2 \\ \cdots\\f'(\phi_{m^2}) -b\phi_{m^2}
		\end{matrix}
		\right]+\varepsilon^2\left(
		D_x^T\left[\begin{matrix}
			\mathcal{A}_1(\bm{p}_1)\\
			\mathcal{A}_1(\bm{p}_2)\\
			\cdots\\
			\mathcal{A}_1(\bm{p}_{m^2})
		\end{matrix}\right]+
		D_y^T\left[\begin{matrix}
			\mathcal{A}_2(\bm{p}_1)\\
			\mathcal{A}_2(\bm{p}_2)\\
			\cdots\\
			\mathcal{A}_2(\bm{p}_{m^2})
		\end{matrix}\right]
		\right).
	\end{aligned}
\end{equation}

\section{Proof of \cref{lemma_rearrange}}\label{appe_rearrange}
Let $ \bm{x}=(x_1,x_2) $. According to the Young's inequality, for any positive integers $m$ and $0\leqslant r\leqslant m$, it follows that
\[
\begin{aligned}
	|x_1^rx_2^{m-r}|\leqslant\frac{r}{m}|x_1|^m+\frac{m-r}{m}|x_2|^m\leqslant|\bm{x}|^{m},
\end{aligned}
\]
which indicates that for any given homogeneous polynomial $\xi_m$ of degree $ m $, there exists a constant $ C_0 $  such that
\begin{equation}\label{rerange}
	|\xi_m(\bm{x})|\leqslant C_0|\bm{x}|^{m}.
\end{equation}

Direct calculations lead to
\begin{equation*}
\frac{\partial\zeta_{_l}}{\partial x_i}=
\begin{cases}
\frac{\partial_{x_i} \eta_{_{l+2}}}{|\bm{x}|^l}-\frac{lx_i\eta_{_{l+2}}}{|\bm{x}|^{l+2}},& x=\bm{0},\\
0,& x\neq\bm{0}.
\end{cases}
\end{equation*}
Because $ \partial_{x_i} \eta_{_{l+2}} $ and $ lx_i\eta_{_{l+2}} $ are homogeneous polynomials of degree $l+1$ and $l+3$ respectively, it follows from \eqref{rerange} that $\lim_{\bm{x}\to\bm{0}}\frac{\partial\zeta_{_l}}{\partial x_i}=0 $ which implies $ \frac{\partial\zeta_{_l}}{\partial x_i} $ is continuous.

Similarly, for any  $ \bm{x}\in\mathbb{R}^2\backslash\{\bm{0}\} $ and $ i,j=1,2$, we have
\[
\frac{\partial^2\zeta_{_l}}{\partial x_i\partial x_j}=\frac{\partial_ {x_ix_j}\eta_{_{l+2}}}{|\bm{x}|^l}-\frac{lx_i\partial_ {x_j}\eta_{_{l+2}}+lx_j\partial_ {x_i}\eta_{_{l+2}}}{|\bm{x}|^{l+2}}+\frac{l(l+2)x_ix_j\eta_{_{l+2}}}{|\bm{x}|^{l+4}}.
\]
The same argument using \eqref{rerange} indicates that $ \frac{\partial^2\zeta_{_l}(\bm{x})}{\partial x_i\partial x_j} $ is bounded.

\section*{Acknowledgments}
The authors gratefully acknowledge many helpful discussions with Jin Zhang (Southern University of Science and Technology) and Chenglong Bao (Tsinghua University) during the preparation of the paper.

\bibliographystyle{siamplain}
\bibliography{refs}
\end{document}